\title[ ]{An explicit infinite homotopy in nonabelian Hodge theory in positive characteristic}
\author[Mao Sheng]{Mao Sheng}
\email{msheng@ustc.edu.cn}
\address{School of Mathematical Sciences,
University of Science and Technology of China, Hefei, 230026, China}
\email{msheng@tsinghua.edu.cn}
	\address{Yau Mathematical Science Center, Tsinghua University, Beijing, 100084, China.}
\author[Zebao Zhang]{Zebao Zhang}
\email{zhangzebao@bicmr.pku.edu.cn}
\address{
Beijing International Center for Mathematical Research, 
Peking University,
Beijing, 100871, China}
\begin{document}
\theoremstyle{plain}
\newcommand{\rev}[1]{\color{red}#1\color{black}}\
\newtheorem{thm}{Theorem}[section]
\newtheorem{theorem}[thm]{Theorem}
\newtheorem*{theorem*}{Theorem}
\newtheorem*{theoremA*}{Theorem A}
\newtheorem*{theoremB*}{Theorem B}
\newtheorem*{theoremC*}{Theorem C}
\newtheorem*{definition*}{Definition}
\newtheorem{lemma}[thm]{Lemma}
\newtheorem{sublemma}[thm]{Sublemma}
\newtheorem{corollary}[thm]{Corollary}
\newtheorem*{corollary*}{Corollary}
\newtheorem{proposition}[thm]{Proposition}
\newtheorem{addendum}[thm]{Addendum}
\newtheorem{variant}[thm]{Variant}
\theoremstyle{definition}
\newtheorem{lemma-definition}[thm]{Lemma-Definition}
\newtheorem{proposition-definition}[thm]{Proposition-Definition}
\newtheorem{theorem-definition}[thm]{Theorem-Definition}
\newtheorem{construction}[thm]{Construction}
\newtheorem{notations}[thm]{Notations}
\newtheorem{question}[thm]{Question}
\newtheorem{problem}[thm]{Problem}
\newtheorem*{problem*}{Problem}
\newtheorem{remark}[thm]{Remark}
\newtheorem*{remark*}{Remark}
\newtheorem{remarks}[thm]{Remarks}
\newtheorem{definition}[thm]{Definition}
\newtheorem{claim}[thm]{Claim}
\newtheorem{assumption}[thm]{Assumption}
\newtheorem{assumptions}[thm]{Assumptions}
\newtheorem{properties}[thm]{Properties}
\newtheorem{example}[thm]{Example}
\newtheorem{conjecture}[thm]{Conjecture}
\numberwithin{equation}{thm}

\newcommand{\et}{\mathrm{\acute{e}t}}
\newcommand{\zar}{\mathrm{zar}}
\newcommand{\sA}{{\mathcal A}}
\newcommand{\sB}{{\mathcal B}}
\newcommand{\sC}{{\mathcal C}}
\newcommand{\sD}{{\mathcal D}}
\newcommand{\sE}{{\mathcal E}}
\newcommand{\sF}{{\mathcal F}}
\newcommand{\sG}{{\mathcal G}}
\newcommand{\sH}{{\mathcal H}}
\newcommand{\sI}{{\mathcal I}}
\newcommand{\sJ}{{\mathcal J}}
\newcommand{\sK}{{\mathcal K}}
\newcommand{\sL}{{\mathcal L}}
\newcommand{\sM}{{\mathcal M}}
\newcommand{\sN}{{\mathcal N}}
\newcommand{\sO}{{\mathcal O}}
\newcommand{\sP}{{\mathcal P}}
\newcommand{\sQ}{{\mathcal Q}}
\newcommand{\sR}{{\mathcal R}}
\newcommand{\sS}{{\mathcal S}}
\newcommand{\sT}{{\mathcal T}}
\newcommand{\sU}{{\mathcal U}}
\newcommand{\sV}{{\mathcal V}}
\newcommand{\sW}{{\mathcal W}}
\newcommand{\sX}{{\mathcal X}}
\newcommand{\sY}{{\mathcal Y}}
\newcommand{\sZ}{{\mathcal Z}}
\newcommand{\A}{{\mathbb A}}
\newcommand{\B}{{\mathbb B}}
\newcommand{\C}{{\mathbb C}}
\newcommand{\D}{{\mathbb D}}
\newcommand{\E}{{\mathbb E}}
\newcommand{\F}{{\mathbb F}}
\newcommand{\G}{{\mathbb G}}
\newcommand{\HH}{{\mathbb H}}
\newcommand{\I}{{\mathbb I}}
\newcommand{\J}{{\mathbb J}}
\renewcommand{\L}{{\mathbb L}}
\newcommand{\M}{{\mathbb M}}
\newcommand{\N}{{\mathbb N}}
\renewcommand{\P}{{\mathbb P}}
\newcommand{\Q}{{\mathbb Q}}
\newcommand{\R}{{\mathbb R}}
\newcommand{\SSS}{{\mathbb S}}
\newcommand{\T}{{\mathbb T}}
\newcommand{\U}{{\mathbb U}}
\newcommand{\V}{{\mathbb V}}
\newcommand{\W}{{\mathbb W}}
\newcommand{\X}{{\mathbb X}}
\newcommand{\Y}{{\mathbb Y}}
\newcommand{\Z}{{\mathbb Z}}
\newcommand{\id}{{\rm id}}
\newcommand{\rank}{{\rm rank}}
\newcommand{\END}{{\mathbb E}{\rm nd}}
\newcommand{\End}{{\rm End}}
\newcommand{\Hom}{{\rm Hom}}
\newcommand{\Hg}{{\rm Hg}}
\newcommand{\tr}{{\rm tr}}
\newcommand{\Sl}{{\rm Sl}}
\newcommand{\Gl}{{\rm Gl}}
\newcommand{\Cor}{{\rm Cor}}
\newcommand{\Aut}{\mathrm{Aut}}
\newcommand{\Sym}{\mathrm{Sym}}
\newcommand{\ModuliCY}{\mathfrak{M}_{CY}}
\newcommand{\HyperCY}{\mathfrak{H}_{CY}}
\newcommand{\ModuliAR}{\mathfrak{M}_{AR}}
\newcommand{\Modulione}{\mathfrak{M}_{1,n+3}}
\newcommand{\Modulin}{\mathfrak{M}_{n,n+3}}
\newcommand{\Gal}{\mathrm{Gal}}
\newcommand{\Spec}{\mathrm{Spec}}
\newcommand{\res}{\mathrm{Res}}
\newcommand{\coker}{\mathrm{coker}}
\newcommand{\Jac}{\mathrm{Jac}}
\newcommand{\HIG}{\mathrm{HIG}}
\newcommand{\MIC}{\mathrm{MIC}}

\thanks{The work is supported by National Key Research and Development Project SQ2020YFA070080, CAS Project for Young Scientists in Basic Research Grant No. YSBR-032, National Natural Science Foundation of China (Grant No. 11721101), Fundamental Research Funds for the Central Universities.}
\begin{abstract}
This short note is extracted from \cite[\S3, Appendix]{SZ}, where an explicit infinite homotopy from a Higgs complex to the Frobenius pushforward of the corresponding de Rham complex in positive characteristic has been provided. The verification details, which are omitted therein, are provided here.  
\end{abstract} 

\maketitle
 
In \cite{SZ}, the notion of an $\sL$-indexed $\infty$-homotopy between complexes was introduced. Let us recall it briefly here for the readers' convenience. Let $(Y,\mathcal O_Y)$ be a ringed space. For $\mathcal F^*,\mathcal G^*$, two complexes of sheaves of $\mathcal O_Y$-modules, 
$$
(\mathcal Hom^*_{\mathcal O_Y}(\mathcal F^*,\mathcal G^*),d_{\mathcal Hom})
$$ denotes for the associated Hom complex. Let $\sL$ be a sheaf of sets over $Y$ whose stalks are all nonempty. Let $\Delta_*(\mathcal L)$ be the simplicial complex attached to $\mathcal L$: For $r\geq0$, $\Delta_r(\mathcal L)$ is the sheaf associated to the presheaf of abelian groups, which assigns to an open subset $U\subset Y$ the free abelian group generated by elements of $\Gamma(U,\mathcal L^{r+1})$. For $r<0$, $\Delta_r(\mathcal L)=0$.
\begin{definition}\label{defn infty homotopy}
 An $\mathcal L$-indexed $\infty$-homotopy from $\mathcal F^*$ to $\mathcal G^*$ is a morphism of complexes of sheaves of abelian groups
 \begin{eqnarray}\label{Delta complex infty}
 	\mathrm{Ho}:\Delta_*(\mathcal L)\to  \mathcal Hom^{*}_{\mathcal O_Y}(\mathcal F^*,\mathcal G^*).
 \end{eqnarray}
 In other words, $\mathrm{Ho}$ is a family of morphisms
 	$$
 	\mathrm{Ho}^r:\mathcal L^{r+1}\to \mathcal Hom^{-r}_{\mathcal O_Y}(\mathcal F^*,\mathcal G^*),~r\geq0
 	$$
 	such that
 	$$
 	\delta\circ \mathrm{Ho}^r=d_{\mathcal Hom}\circ\mathrm{Ho}^{r+1},
 	$$
 	and the images of $\mathrm{Ho}^0$ are morphism of complexes. Here $\mathcal L^{r+1}$ is the direct product of $(r+1)$ copies of $\mathcal L$ and for $f:\mathcal L^{r+1}\to\mathcal Hom^{-r}_{\mathcal O_Y}(\mathcal F^*,\mathcal G^*)$,
 	$$
 	\delta f:\mathcal L^{r+2}\to\mathcal Hom^{-r}_{\mathcal O_Y}(\mathcal F^*,\mathcal G^*),~(l_0,\cdots,l_{r+1})\mapsto\sum_{q=0}^{r+1}(-1)^qf(\cdots,\widehat{l_q},\cdots).
 	$$
 \end{definition}

Now let $k$ be a perfect field of characteristic $p>0$, $X$ a smooth variety of dimension $n$ over $k$ and $D\subset X$ a reduced NCD. We assume the pair $(X,D)$ is $W_2(k)$-liftable. We choose and then fix such a lifting $(\tilde X,\tilde D)$. Set $\sX/\sS=((X,D)/k,(\tilde X',\tilde D')/W_2(k))$, where $(\tilde X',\tilde D')$ is the fiber product $(\tilde X,\tilde D)\times_{W_2(k),\sigma}W_2(k)$. Let $F: X\to X'$ be the relative Frobenius morphism over $k$. For simplicity, we write $\Omega_{X_{\log}/k}=\Omega_X(\log D)$. The notion of an $\sL$-indexed  $\infty$-homotopy comes from our attempt to generalize the construction of an \emph{explicit} quasi-isomorphism due to Deligne-Illusie \cite{DI} to coefficients.
\begin{theorem}[{\cite[Th\'eor\`eme 2.1]{DI}}]\label{DI case}
Let $\mathcal L=F_*\mathcal L_{\mathcal X/\mathcal S}$ be the sheaf of log Frobenius liftings. Then there is an explicit $\mathcal L$-indexed $\infty$-homotopy \begin{eqnarray*}\label{Ho for trivial}
 		\mathrm{Ho}:\Delta_*(\mathcal L)\to\mathcal Hom^*_{\mathcal O_{X'}}(\bigoplus_{i=0}^{p-1}\Omega^i_{X'_{\log}/k}[-i],\tau_{<p}F_*\Omega^*_{X_{\log}/k})
 	\end{eqnarray*}
 	such that $\mathrm{Ho}^0$ sends any section of $\mathcal L$ to a quasi-isomorphism.
 \end{theorem}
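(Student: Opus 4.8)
The plan is to repackage the local recipe of Deligne--Illusie into a single cocycle over the polynomial de Rham complex of the standard simplex, take exterior powers, and integrate fibrewise over the simplices; the $\infty$-homotopy identities will then be an instance of Stokes' theorem, and the quasi-isomorphism clause will be the classical Cartier isomorphism. \emph{First}, I would recall from \cite{DI} the local data. For a local log Frobenius lifting $\tilde F\in\mathcal L$, the map $d\tilde F^*$ on log differentials is divisible by $p$ (since $d\circ F^*=0$ and $p^2=0$ in $W_2(k)$); write $\zeta_{\tilde F}=\tfrac1p\,d\tilde F^*$ for the resulting $\mathcal O_{X'}$-linear map $\Omega^1_{X'_{\log}/k}\to F_*\Omega^1_{X_{\log}/k}$. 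Two facts will be used: (i) $d\circ\zeta_{\tilde F}=0$, and $\wedge^i\zeta_{\tilde F}$ represents the inverse Cartier map $C^{-1}$ on $\mathcal H^i$ (it is multiplicative and agrees in degree one with a $C^{-1}$-representative); (ii) for two liftings over a common open, $\tfrac1p(\tilde F_j^*-\tilde F_0^*)$ defines an $\mathcal O_{X'}$-linear map $h_{0j}\colon\Omega^1_{X'_{\log}/k}\to F_*\mathcal O_X$ with $\zeta_{\tilde F_j}-\zeta_{\tilde F_0}=d\circ h_{0j}$, with $h_{00}=0$, and satisfying the cocycle identity $h_{0j}=h_{01}+h_{1j}$.

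\emph{Second}, for a tuple $(\tilde F_0,\dots,\tilde F_r)\in\mathcal L^{r+1}$ over an open $U\subset X'$ I would work in the graded-commutative DGA $A^*\otimes_kF_*\Omega^*_{X_{\log}/k}$, where $A^*=\Omega^*_{\mathrm{poly}}(\Delta^r)$ is the polynomial de Rham complex of the $r$-simplex and $D=d_\Delta+d_X$ is the total differential (with Koszul signs). Put $\zeta_{\underline t}:=\sum_{j=0}^r t_j\,\zeta_{\tilde F_j}$ and $\theta:=\sum_{j=0}^r dt_j\,h_{0j}$, two maps $\Omega^1_{X'_{\log}/k}\to A^*\otimes_kF_*\Omega^*_{X_{\log}/k}$ of total degree $1$. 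The crucial observation — which I would establish first, and which is what makes the whole construction go through — is that $\zeta_{\underline t}+\theta$ is a $D$-cocycle; using $\sum_j dt_j=0$ this reduces to the single identity $d_\Delta\zeta_{\underline t}=-d_X\theta$, i.e. to (ii), together with the trivial $d_X\zeta_{\underline t}=0$ and $d_\Delta\theta=0$. Consequently each $\wedge^i(\zeta_{\underline t}+\theta)$ is a $D$-cocycle of degree $i$; its $F_*\Omega^*$-component has degree $\le i\le p-1$, so it lands in $A^*\otimes\tau_{<p}F_*\Omega^*_{X_{\log}/k}$, and the $\wedge^i(\zeta_{\underline t}+\theta)$ assemble into a morphism of complexes $\Lambda(\tilde F_0,\dots,\tilde F_r)\colon\bigoplus_{i=0}^{p-1}\Omega^i_{X'_{\log}/k}[-i]\to A^*\otimes_k\tau_{<p}F_*\Omega^*_{X_{\log}/k}$ (the source has zero differential).

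\emph{Third}, I would set $\mathrm{Ho}^r(\tilde F_0,\dots,\tilde F_r):=\varepsilon_r\cdot\int_{\Delta^r}\big[\Lambda(\tilde F_0,\dots,\tilde F_r)\big]_{\deg_\Delta=r}$, with $\int_{\Delta^r}$ the fibre integration over the standard simplex applied to the polynomial coefficients — legitimate over $k$ because $\int_{\Delta^r}t^\alpha=\alpha!/(r+|\alpha|)!$ and every factorial occurring has argument $r+|\alpha|\le i\le p-1$, hence is a unit mod $p$ — and $\varepsilon_r\in\{\pm1\}$ a sign to be fixed by the recursion below; explicitly,
\[
\mathrm{Ho}^r(\tilde F_0,\dots,\tilde F_r)(\omega_1\wedge\cdots\wedge\omega_i)=\varepsilon_r\!\!\sum_{\sigma\in\mathrm{Sh}(r,i-r)}\!\!\mathrm{sgn}(\sigma)\int_{\Delta^r}\theta(\omega_{\sigma(1)})\wedge\cdots\wedge\theta(\omega_{\sigma(r)})\wedge\zeta_{\underline t}(\omega_{\sigma(r+1)})\wedge\cdots\wedge\zeta_{\underline t}(\omega_{\sigma(i)}),
\]
vanishing on $\Omega^i_{X'_{\log}/k}$ for $i<r$. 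This is functorial in $U$, so it extends $\Z$-linearly and sheafifies to $\mathrm{Ho}^r\colon\Delta_r(\mathcal L)\to\mathcal Hom^{-r}_{\mathcal O_{X'}}(\bigoplus_i\Omega^i_{X'_{\log}/k}[-i],\tau_{<p}F_*\Omega^*_{X_{\log}/k})$. For $r=0$ the simplex is a point, $\theta$ disappears, $\mathrm{Ho}^0(\tilde F)=(\wedge^i\zeta_{\tilde F})_i$, which by (i) is a morphism of complexes inducing $C^{-1}$ on cohomology; the logarithmic Cartier isomorphism makes $C^{-1}$ an isomorphism in degrees $\le p-1$ while both sides vanish in degrees $\ge p$, so $\mathrm{Ho}^0(\tilde F)$ is a quasi-isomorphism, which is the last assertion. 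For the relation $\delta\circ\mathrm{Ho}^{r-1}=d_{\mathcal Hom}\circ\mathrm{Ho}^r$ ($r\ge1$) I would argue: since the source has zero differential, $d_{\mathcal Hom}\circ\mathrm{Ho}^r=d_X\circ\mathrm{Ho}^r$; $D$-closedness of $\Lambda$ gives $(1\otimes d_X)[\Lambda]_{\deg_\Delta=r}=-(d_\Delta\otimes1)[\Lambda]_{\deg_\Delta=r-1}$; commuting $\int_{\Delta^r}$ past $d_X$ and applying Stokes' theorem $\int_{\Delta^r}d_\Delta=\sum_{q=0}^r(-1)^q\int_{\text{$q$-th face}}$ reduces matters to identifying the restriction of $\zeta_{\underline t}+\theta$ to the $q$-th face of $\Delta^r$ with the analogous datum for $(\tilde F_0,\dots,\widehat{\tilde F_q},\dots,\tilde F_r)$ — immediate for $q\ge1$, and for $q=0$ a consequence of the cocycle identity $h_{0j}=h_{01}+h_{1j}$ together with $\sum_{j\ge1}dt_j=0$ on that face — so the right-hand side equals $\pm(\varepsilon_r/\varepsilon_{r-1})\,\delta\circ\mathrm{Ho}^{r-1}$, and $\varepsilon_r$ (with $\varepsilon_0=1$) is defined by the recursion forcing this sign to be $+1$.

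\emph{Main obstacle.} I do not expect a hard conceptual step: once one sees that $\zeta_{\underline t}+\theta$ is a $D$-cocycle, the rest is formal manipulation. The actual work is bookkeeping — the Koszul signs in $A^*\otimes F_*\Omega^*$, the orientation signs of the simplex faces and their agreement with the signs in $\delta$, the recursive signs $\varepsilon_r$, and the rational coefficients coming from $\int_{\Delta^r}t^\alpha$ — together with the one genuinely positive-characteristic point, namely that these simplex integrals never introduce a $p$ in a denominator, which is exactly what the truncation below $p$ (equivalently, the bound $i\le p-1$) ensures and is the precise reason the statement is phrased with $\tau_{<p}$ and $\bigoplus_{i=0}^{p-1}$. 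I would also record the harmless edge case at $X$-degree $p-1$, where $d_X$ would formally land in the excised degree $p$: there the relevant components are already $d_X$-closed, so no inconsistency arises.
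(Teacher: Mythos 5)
Your construction is correct, but it is a genuinely different route from the one taken in the paper. You build the homotopy Dupont-style: the degree-one $D$-cocycle $\zeta_{\underline t}+\theta$ in $\Omega^*_{\mathrm{poly}}(\Delta^r)\otimes F_*\Omega^*_{X_{\log}/k}$, its exterior powers, fibrewise integration over the simplex, and Stokes' theorem for the identity $\delta\circ\mathrm{Ho}^{r-1}=d_{\mathcal Hom}\circ\mathrm{Ho}^{r}$, with the positive-characteristic point isolated in the observation that the only denominators are $(r+|\alpha|)!\le (p-1)!$ -- and indeed this is exactly why the statement carries $\tau_{<p}$ and $\bigoplus_{i=0}^{p-1}$. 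The paper instead proves the general coefficient version (Theorem \ref{infty homotopy}), of which Theorem \ref{DI case} is the case $(E,\theta)=(\mathcal O_{X'},0)$: it poses a universal two-dimensional discrete initial value problem over the Higgs--de Rham ring $B_{\mathrm{HdR}}$, writes down the explicit combinatorial solution $\varphi_\infty(r,s)$ of Definition \ref{varphi construction}, verifies it by the adjoint/pairing computation of Lemma \ref{adjoint} and the following lemma, controls $p$-integrality through $B^{\mathbb Z_{(p)}}_{\mathrm{HdR}}$, and then evaluates geometrically via the scalarization and evaluation morphisms of \S3. Your approach is shorter and more transparent for trivial coefficients (the mysterious rational coefficients become simplex integrals, and the homotopy identities become Stokes), whereas the paper's formulation is built to accommodate a nilpotent Higgs field -- the operators $\exp(\sum_l\theta_l h_{1,l})$, $\Theta$ and the inverted $\theta_i$ in $B_{\mathrm{HdR}}$ have no naive analogue in your simplicial picture -- and to make independence of choices and the evaluation formalism systematic. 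Two small points you should pin down when writing this up: state Stokes' theorem for polynomial forms as an identity over $\mathbb Z_{(p)}$ in the relevant degree range before reducing mod $p$, and check that the comparison sign between $d_X\circ\mathrm{Ho}^r$ and the alternating face sum depends only on $r$ (not on the internal degree $s$), which is what makes your recursive choice of $\varepsilon_r$ legitimate; with the standard Koszul convention $D=d_\Delta\otimes 1+(-1)^{\deg_\Delta}\otimes d_X$ this is automatic.
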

Ogus-Vologodsky \cite{OV} (later Schepler \cite{S} in the logarithmic setting) establishes a theory of coefficients in the context of nonabelian Hodge theory in positive characteristic. For a nilpotent Higgs module $(E,\theta)$ of level $\ell\leq p-1$ over $(X',D')/k$, there is a corresponding module with integrable connection over $(X,D)/k$
$$
(H,\nabla):=C^{-1}_{\sX/\sS}(E,\theta),
$$ 
the so-called inverse Cartier transform of $(E,\theta)$. For $(E,\theta)=(\sO_{X'},0)$, its inverse Cartier transform is nothing but $(\sO_X,d)$. Recall that for an integrable $\lambda$-connection $(E,\nabla)$ over $(X,D)/k$ ($\lambda\in k$), the associated de Rham complex $\Omega^*(E,\nabla)$ is defined as
$$
E\stackrel{\nabla}{\to} E\otimes \Omega_{X_{\log}/k}\stackrel{\nabla}{\to} E\otimes \Omega^2_{X_{\log}/k}\stackrel{\nabla}{\to}\cdots.
$$
The following result generalizes Theorem \ref{DI case}.
\begin{theorem}\label{infty homotopy}
Notation as above. Then there is an explicit $\sL:=F_*\mathcal L_{\mathcal X/\mathcal S}$-indexed $\infty$-homotopy $\mathrm{Ho}$ from $\tau_{<p-\ell}\Omega^{*}(E,\theta)$ to $\tau_{<p-\ell}F_{*}\Omega^{*}(H,\nabla)$
				$$
		\mathrm{Ho}^r:\mathcal L^{r+1}\to\mathcal Hom^{-r}_{\mathcal O_{X'}}(\tau_{<p-\ell}\Omega^{*}(E,\theta),\tau_{<p-\ell}F_{*}\Omega^{*}(H,\nabla))
		$$
		such that the images of $\mathrm{Ho}^0$ are quasi-isomorphisms.  
\end{theorem}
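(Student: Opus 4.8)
The plan is to write down the homotopy $\mathrm{Ho}$ by a completely explicit formula, local in $X'$, built out of two ingredients — the Deligne-Illusie data attached to log Frobenius liftings and the nilpotent Higgs field $\theta$ — and then to check by direct computation that (i) each $\mathrm{Ho}^{0}(l)$ is a quasi-isomorphism and (ii) the family $\{\mathrm{Ho}^{r}\}$ satisfies the identities $\delta\circ\mathrm{Ho}^{r}=d_{\mathcal Hom}\circ\mathrm{Ho}^{r+1}$ of Definition \ref{defn infty homotopy}. The geometric input is the local model of the inverse Cartier transform. Over an open $U\subset X$ carrying a section $l=\tilde F$ of $\mathcal L_{\mathcal X/\mathcal S}$, dividing $\tilde F^{*}$ by $p$ produces an $\mathcal O_{X'}$-linear map $\zeta_{l}\colon\Omega^{1}_{X'_{\log}/k}\to F_{*}\Omega^{1}_{X_{\log}/k}$ with closed image; it extends multiplicatively to a morphism of graded algebras $\psi_{l}$, which is exactly $\mathrm{Ho}^{0}_{DI}(l)$ of \cite{DI} (Theorem \ref{DI case}), and its pairwise and higher homotopies $\mathrm{Ho}^{r}_{DI}(l_{0},\dots,l_{r})$ are likewise explicit. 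On the other hand, over $U$ one has a canonical identification $(H,\nabla)\cong(F^{*}E,\nabla_{\mathrm{can}}+\theta_{l})$ with $\theta_{l}:=(\mathrm{id}\otimes\zeta_{l})\circ F^{*}\theta$; here $\theta_{l}$ is $\nabla_{\mathrm{can}}$-horizontal and valued in closed $1$-forms, so the connection is integrable, and for two liftings the local models are glued by $\exp$ of the contraction of $\theta$ against the Deligne-Illusie difference datum, a finite sum because $\theta$ is nilpotent of level $\ell<p$.

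For $\mathrm{Ho}^{0}$ I would take, over $U$, the map $e\otimes\omega\mapsto F^{*}e\otimes\psi_{l}(\omega)$ followed by the above identification. It is a morphism of complexes $\tau_{<p-\ell}\Omega^{*}(E,\theta)\to\tau_{<p-\ell}F_{*}\Omega^{*}(H,\nabla)$ because $\nabla_{\mathrm{can}}F^{*}e=0$, $d\psi_{l}(\omega)=0$, and the Higgs differential $\theta\wedge(-)$ is intertwined, via $F^{*}\otimes\psi_{l}$, with the $\theta_{l}$-part of $\nabla$ ($\theta_{l}$ being $\psi_{l}$ applied to $\theta$). To see it is a quasi-isomorphism I would compare it, via the conjugate filtration on $F_{*}\Omega^{*}(H,\nabla)$ and the nilpotent ($\theta$-adic) filtration on $\Omega^{*}(E,\theta)$, with the morphism of Theorem \ref{DI case} tensored with the graded pieces of $(E,\theta)$: on the associated graded one recovers the Deligne-Illusie quasi-isomorphism, while the coefficient Cartier isomorphism of \cite{OV,S} identifies the cohomology sheaves on the truncations, valid precisely in the range below $p-\ell$. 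This is also where the truncation bound is forced: small enough that $\mathrm{Ho}^{0}$ stays exact, large enough that every factorial $1/k!$ occurring below has $k<p$.

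For $r\ge 1$, $\mathrm{Ho}^{r}(l_{0},\dots,l_{r})$ is defined by feeding the explicit $\mathrm{Ho}^{r}_{DI}(l_{0},\dots,l_{r})$ into $E$-coefficients and twisting by the $\theta$-contractions that govern the regluing of the local models: schematically, on $e\otimes\omega$ one sums over the number $k\ge 0$ of $\theta$-insertions, applies $\tfrac{1}{k!}$ times a $k$-fold product of local Higgs endomorphisms to $e$, and distributes the forms arising from $\theta^{k}e$ and from $\omega$ over the operators $\zeta_{l_{j}}$ and the Deligne-Illusie homotopies according to the combinatorics of $\mathrm{Ho}^{r}_{DI}$. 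Verifying $\delta\circ\mathrm{Ho}^{r}=d_{\mathcal Hom}\circ\mathrm{Ho}^{r+1}$ then separates into the Deligne-Illusie layer, which is the corresponding identity of Theorem \ref{DI case}; the $\theta$-layer, which collapses by $\theta\wedge\theta=0$ and the commutativity of the local Higgs components; and the combinatorial identities reconciling the coefficients $1/k!$ for adjacent $k$. Independence of the chosen local trivialization of $(H,\nabla)$ — needed so that the $\mathrm{Ho}^{r}$ glue to global morphisms of sheaves of abelian groups over $X'$ — follows from running the same computation under a change of Frobenius lifting, using the exponential gluing above.

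The hard part will be the verification in the last step: one has to control two interlocked layers at once — the Deligne-Illusie layer of normalized pullbacks and homotopies, and the Higgs layer of contractions and exponentials — propagate the Koszul signs through both, and confirm that $\tau_{<p-\ell}$ is stable under $d_{\mathcal Hom}$ and is exactly the range in which all the exponentials terminate and all the factorials are invertible; any smaller truncation would destroy the exactness of $\mathrm{Ho}^{0}$ and any larger one would destroy the homotopy relations. By contrast, once the two filtrations in the previous step are in place, the quasi-isomorphism statement is a formal consequence of Theorem \ref{DI case} together with \cite{OV,S} on the graded pieces.
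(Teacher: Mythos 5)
Your $\mathrm{Ho}^{0}$ is essentially the right map (it matches the initial condition $\varphi_\infty(0,s)=\sum_{|I|=s}e_I\zeta_{0,I}$ of the paper's construction), and reducing its quasi-isomorphism property to Theorem \ref{DI case} together with the Cartier isomorphism of \cite{OV,S} via the $\theta$-nilpotency filtration is a reasonable sketch of that part. The genuine gap is at $r\geq1$: you never actually define $\mathrm{Ho}^{r}(l_0,\dots,l_r)$. The ``schematic'' recipe --- $\tfrac{1}{k!}$ times $k$-fold Higgs insertions distributed over the Deligne--Illusie combinatorics --- is not a formula, and the claim that the verification of $\delta\circ\mathrm{Ho}^{r}=d_{\mathcal Hom}\circ\mathrm{Ho}^{r+1}$ then separates into a Deligne--Illusie layer, a $\theta$-layer killed by $\theta\wedge\theta=0$, and factorial bookkeeping is exactly where the argument breaks down. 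Already for $r=1$, comparing $\mathrm{Ho}^{0}(\tilde F_1)$ with $\mathrm{Ho}^{0}(\tilde F_0)$ forces you to re-identify $F_*\Omega^\bullet(H_{\tilde F_1},\nabla_{\tilde F_1})$ with $F_*\Omega^\bullet(H_{\tilde F_0},\nabla_{\tilde F_0})$ through the transition $G_{\tilde F_0\tilde F_1}$, an exponential in $\sum_l\theta_l h_{\tilde F_0\tilde F_1}(\omega_l)$; this couples the Higgs insertions to the Deligne--Illusie difference data, and the coupling propagates through all higher $r$. This is visible in the paper's solution: the coefficients $a(\underline{\underline j},\underline s)$ in Definition \ref{varphi construction} are not products of inverse factorials in the number of $\theta$-insertions but a family in which the $h$-exponents and the $\zeta$-multiplicities enter jointly, and in the homotopy identity the operator $\delta_0=\mathrm{exp}(\sum_l\theta_l h_{1,l})\mathrm{Shift}_0$ mixes the two ``layers''. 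There is no reason the naively $\theta$-twisted Deligne--Illusie homotopies satisfy the required identities, and your proposal offers no mechanism for producing coefficients that do.

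That mechanism is precisely what the paper's proof consists of: the universal Higgs--de Rham ring $B_{\mathrm{HdR}}$, the two-dimensional discrete initial value problem \eqref{initial problem} encoding \eqref{verification infty homotopy}, the closed-form solution $\varphi_\infty$ of Definition \ref{varphi construction} verified through the pairing and adjoint operators of Lemma \ref{adjoint} and the case analysis proving Theorem \ref{solvability}, followed by the geometric step --- scalarization of $(E,\theta)$, the evaluation morphism of Definition \ref{evaluation}, and independence of the chosen frame of $\Omega^1_{X'_{\log}/k}$ --- which turns the reduction $\varphi_p(r,s)$ into the global morphisms $\mathrm{Ho}^{r}$. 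To complete your approach you would have to either exhibit such a closed formula and verify the coupled identity directly, or give an inductive construction of the higher homotopies with a proof that the obstructions vanish; as written, the case $r\geq1$ is asserted rather than proved, and the ``layer separation'' heuristic would not survive the exponential regluing.
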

The above theorem is \cite[Theorem 3.5]{SZ}. The purpose of the note is to provide the omitted computational details, which are completely elementary, but unfortunately heavy in notations. We organize it as follows: \S1 contains our motivations for introducing the so-called Higgs-de Rham ring $B_{\mathrm{HdR}}$ and the associated two-dimensional discrete initial value problem over it. In \S2, we recall the construction of $B_{\mathrm{HdR}}$, and verify that the explicit formula (Definition \ref{varphi construction}) is indeed a solution to the initial value problem. This is the major part of the note. In \S3, we show that our construction is independent of the choices of frames of $\Omega^1_{X'_{\log}/k}$.

\section{Motivation}
Keep the notation as in Theorem \ref{infty homotopy}. For simplicity, we assume $\dim(X)<p-\ell$. Suppose that there is an $\mathcal L$-indexed $\infty$-homotopy of complexes of $\sO_{X'}$-modules
$$
\mathrm{Ho}:\Delta_*(\mathcal L)\to\mathcal Hom^{*}(\Omega^{*}(E,\theta),F_*\Omega^{*}(H,\nabla))
$$
such that 
$$
\mathrm{Ho}^0:\Delta_0(\mathcal L)\to\mathcal Hom^{0}(\Omega^{*}(E,\theta),F_*\Omega^{*}(H,\nabla))$$
is given by $\tilde F\mapsto\varphi_{\tilde F}$ and $\varphi_{\tilde F}$ is a quasi-isomorphism. Given a sequence of global liftings  $\tilde F_0,\cdots,\tilde F_r,\cdots$ of $F$. For any $r,s\geq 0$, consider 
$$
\mathrm{Ho}^r(\tilde F_0,\cdots,\tilde F_r)_s\in\mathrm{Hom}_{\mathcal O_{X'}}(\Omega^{r+s}(E,\theta),F_*\Omega^s(H,\nabla)).
$$
For any $q\geq0$, we set $\rho_q:\mathbb N\to\mathbb N-\{q\}$ to be the unique increasing bijection. Consider
$$
\mathrm{Ho}^r(\tilde F_{\rho_q(0)},\cdots,\tilde F_{\rho_q(r)})_s\in\mathrm{Hom}_{\mathcal O_{X'}}(\Omega^{r+s}(E,\theta),F_*\Omega^s(H,\nabla)).
$$
Since $\mathrm{Ho}$ is a morphism of complexes, we have
\begin{eqnarray}\label{verification infty homotopy}
\begin{array}{cl}
&\sum_{q=0}^{r}(-1)^q\mathrm{Ho}^{r-1}(\tilde F_{\rho_q(0)},\cdots,\tilde F_{\rho_q(r-1)})_{s+1}\\
=&(-1)^s\nabla\mathrm{Ho}^r(\tilde F_0,\cdots,\tilde F_r)_s+(-1)^{s+1}\mathrm{Ho}^r(\tilde F_0,\cdots,\tilde F_r)_{s+1}\theta.
\end{array}
\end{eqnarray}
Next, we try to understand the equality above.
\subsection{Understanding $\mathrm{Ho}^r(\tilde F_0,\cdots,\tilde F_r)_s$}
Assume there exists a basis for $\Omega_{X'_{\log}/k}$, say $\omega'_1,\cdots,\omega'_n$. Write $\theta=\sum_{i=1}^n\theta_i\otimes \omega'_i$. Take an element $x\in\Omega^{r+s}(E,\theta)$ which has skew-symmetric form
\begin{eqnarray}\label{skew-symmetric x}x=\frac{1}{(r+s)!}\sum_{i_{1},\cdots,i_{r+s}}e_{i_{1},\cdots,i_{r+s}}\otimes\omega_{i_{1}}\wedge \cdots\wedge\omega_{i_{r+s}},\end{eqnarray}
where $e_{i_{1},\cdots,i_{r+s}}\in E$ satisfies
$e_{i_{\sigma(1)},\cdots,i_{\sigma(r+s)}}=\mathrm{sgn}(\sigma)e_{i_{1},\cdots,i_{r+s}}$ for any permutation of $\{1,\cdots,r+s\}$. We hope that there is a universal polynomial $\varphi_p(r,s)$ over $\mathbb F_p$ of indeterminants 
\begin{eqnarray}\label{generators}
\begin{array}{c}
\zeta_{k,l},~0\leq k\leq r,1\leq l\leq n;~h_{k,l},~1\leq k\leq r,1\leq l\leq n;\\
\theta_i,~1\leq i\leq n;~e_I,~I\subset\{1,\cdots,n\},|I|=r+s
\end{array}
\end{eqnarray}

such that $\mathrm{Ho}^r(\tilde F_0,\cdots,\tilde F_r)_s(x)$ can be obtained by the following two steps:
\begin{itemize}
\item evaluating $\varphi_p(r,s)$ at 
$$
\zeta_{\tilde F_k}(\omega_l'),~h_{\tilde F_{k-1}\tilde F_k}(\omega_l'),~\theta_i,~e_{i_1,\cdots,i_{r+s}}~(I=\{i_1,\cdots,i_{r+s}\},~i_1<\cdots<i_{r+s}).
$$
which is a section of $E\otimes F_*\Omega^s_{X_{\log}/k}$;
\item using identifications
\begin{eqnarray}\label{identification F_0}
 E\otimes F_*\Omega^s_{X_{\log}/k}=F_*\Omega^s(H_{\tilde F_0},\nabla_{\tilde F_0})\cong F_*\Omega^s(H,\nabla),
\end{eqnarray}
we regard the section obtained above as a section of $F_*\Omega^s(H,\nabla)$.
\end{itemize}
It  is natural to think the indeterminants above are pairwisely commutative except for the skew-symmetric relation 
$$
\zeta_{k,l}\zeta_{k',l'}=-\zeta_{k',l'}\zeta_{k,l}.
$$
\subsection{Understanding $\mathrm{Ho}^r(\tilde F_{\rho_q(0)},\cdots,\tilde F_{\rho_q(r-1)})_{s+1}$}
Firstly, we evaluate $\varphi_p(r-1,s+1)$ at
$$
\zeta_{\tilde F_{\rho_q(k)}}(\omega_l),~h_{\tilde F_{\rho_q(k-1)}\tilde F_{\rho_q(k)}}(\omega_l),~\theta_i,~e_{i_1,\cdots,i_{r+s}}
$$
which is a section of $E\otimes F_*\Omega^{s+1}_{X_{\log}/k}$.
 Next, we regard this section as a section of $F_*\Omega^{s+1}(H,\nabla)$ via
 $$
 E\otimes F_*\Omega^{s+1}_{X_{\log}/k}=F_*\Omega^{s+1}(H_{\tilde F_{\rho_q(0)}},\nabla_{\tilde F_{\rho_q(0)}})\cong F_*\Omega^{s+1}(H,\nabla).
$$ 
Note that $\rho_q(0)=0$ for $q>0$ and $\rho_0(0)=1$ for $q=0$. Using the transition morphism $G_{\tilde F_0\tilde F_1}$, we have the following important diagram
$$
\xymatrix{
E\otimes F_*\Omega^\bullet_{X_{\log}/k}\ar[r]^-{=}\ar[d]^-{G_{\tilde F_0\tilde F_1}}&F_*\Omega^\bullet(H_{\tilde F_1},\nabla_{\tilde F_1})\ar[r]^-{\cong}&F_*\Omega^\bullet(H,\nabla)\ar[d]^-{=}\\
E\otimes F_*\Omega^\bullet_{X_{\log}/k}\ar[r]^-{=}&F_*\Omega^\bullet(H_{\tilde F_0},\nabla_{\tilde F_0})\ar[r]^-{\cong}&F_*\Omega^\bullet(H,\nabla).
}
$$
\subsection{Understanding $\nabla$}
Using the identifications \eqref{identification F_0}, we have
$$F_*\Omega^\bullet(H,\nabla)=E\otimes F_*\Omega^\bullet_{X_{\log}/k}$$
and 
$$\nabla=\nabla_{\mathrm{can}}+(\sum_{l=1}^n\theta_l\otimes\zeta_{\tilde F_0}(\omega_l))\wedge.$$
Here $\nabla_{\mathrm{can}}(e\otimes\omega)=e\otimes d\omega$ and 
$$
[(\sum_{l=1}^n\theta_l\otimes\zeta_{\tilde F_0}(\omega_l))\wedge](e\otimes\omega)=\sum_{l=1}^n\theta_l(e)\otimes[\zeta_{\tilde F_0}(\omega_l)\wedge\omega].
$$
\subsection{Understanding $\theta$}
Let $x\in\Omega^{r+s}(E,\theta)$ be given as \eqref{skew-symmetric x}. Then
\begin{eqnarray}\label{skew-symmetric theta e}\theta(x)=\frac{1}{(r+s+1)!}\sum_{i_{1},\cdots,i_{r+s+1}}\theta(x)_{i_{1},\cdots,i_{r+s+1}}\otimes\omega_{i_{1}}\wedge \cdots\wedge\omega_{i_{r+s+1}},\end{eqnarray}
where
$\theta(x)_{i_{1},\cdots,i_{r+s+1}}=\sum_{k=1}^{r+s+1}(-1)^{k-1}\theta_{i_{k}}e_{i_{1},\cdots,\hat{i_{k}},\cdots,i_{r+s+1}}$. One can check that for any permutation $\sigma$ of $\{1,\cdots,r+s+1\}$, we have
$$
\theta(x)_{i_{\sigma(1)},\cdots,i_{\sigma(r+s+1)}}=\mathrm{sgn}(\sigma)\theta(x)_{i_{1},\cdots,i_{r+s+1}}.
$$

The universality of $\varphi_p(r,s)$ implies that its formation should be independent of the characteristic $p$ and the dimension of $X$. In other words,  there should be a universal function $\varphi_\infty$ defined on $\mathbb Z\times\mathbb Z$ which takes value in a ring generated by \eqref{generators} over $\mathbb Q$. Moreover, $\varphi$ should satisfies the equation described by \eqref{verification infty homotopy}. This viewpoint will be achieved in the next subsection.
\subsection{An initial value problem and its solution}
Let $R$ be any commutative ring with identity. Let $A$ be the polynomial algebra over $R$ with the following three types of indeterminate: 
\begin{itemize}
\item $\theta _l, ~l=1,2,\cdots$;
\item $e_I,~I\subset \mathbb \Z_{>0},|I|<\infty$;
\item $h_{k,l},~k,l=1,2,\cdots$.
\end{itemize}
Let $\mathfrak a$ be the ideal generated by indeterminate of the second type, and let $\bar A$ be the quotient ring $A/\mathfrak a^2$. Let $M$ be the free $\bar A$-module generated by 
\begin{itemize}
\item $\zeta_{k,l},k=0,1,~\cdots,~l=1,2,\cdots$.
\end{itemize}
Recall that the exterior algebra $B_0:=\bigwedge_{\bar A}(M)$ is defined to be the quotient of the tensor algebra $T_{\bar A}(M)$ by the two-sided ideal generated by all expressions $x\otimes x$ for $x\in M$. So $B_0=\bigoplus_n\bigwedge^n(M)$ is a skew commutative graded $\bar A$-algebra. 
\begin{definition}
Let $\mathcal I\subset B_0$ be the two-sided ideal generated by $\{\theta_i,e_I,h_{k,l},\zeta_{k,l}\}$, and for $s\in \N$,
$\mathcal J_s$ be the two-sided ideal generated by $\theta_i,e_I,h_{k,l},\zeta_{k,l}$ with $i,|I|,k+l\geq s$. Let $B_1$ to be the completion of $B_0$ with respect to the decreasing family of two-sided ideals $\{\mathcal I^s+\mathcal J^s\}_{s}$. Finally, we define 
 $$B_{\mathrm{HdR}}:=B_1[\theta_i^{-1},i\geq1].$$
 We call it the Higgs-de Rham ring. To emphasize this ring is defined over $R$, we add a superscript $R$, i.e. $B^R_{\mathrm{HdR}}$. When $R=\mathbb Q$, we simplify write it as $B_{\mathrm{HdR}}$.
\end{definition}
 For each positive number $m$, let $\mathfrak b^R_m$ be the closure of the two-sided ideal  generated by monomials in $h_{k,l}, \zeta_{k,l}$ with total power $\geq m$. The quotient ring $B^R_{\mathrm{HdR},m}:=B^R_{\mathrm{HdR}}/\mathfrak{b}^R_m$ will be important for later use. Note that there is a natural decomposition of $R$-modules
\begin{eqnarray}\label{decomposition B}
B^R_{\mathrm{HdR}}=B^R_{\mathrm{HdR},m}\bigoplus\mathfrak{b}_m^R.
\end{eqnarray}
In fact, any element in $B^R_{\mathrm{HdR}}$ can be uniquely expressed as the sum of an  $R$-linear form combination of monomials in $\theta_l,e_I,h_{k,l},\zeta_{k,l}$ such that the total power in $h_{k,l},\zeta_{k,l}$ is $<m$ and an  $R$-linear form combination of monomials in $\theta_l,e_I,h_{k,l},\zeta_{k,l}$ such that the total power in $h_{k,l},\zeta_{k,l}$ is $\geq m$.
We introduce a subspace $B^R_{\mathrm{HdR},f}:=\bigcup_mB^R_{\mathrm{HdR},m}$. 

Next, our discussion is over $\mathbb Q$.
Let $d: B_0\to B_0$ be the additive map determined by the following rules:
$$
d\theta_l=de_I=d\zeta_{k,l}=0,~dh_{k,l}=\zeta_{k,l}-\zeta_{k-1,l},~d(xy)=(dx)y+(-1)^{n}xdy,
$$
for $x\in \bigwedge^n(M)$. Clearly, $d$ is continuous with the topology defined by $\{\mathcal I^s+\mathcal J^s\}_{s}$. Thus it extends uniquely an additive continuous operator on $B_1$, which is also denoted by $d$. We regard the element  $\sum_{l=1}^\infty\theta_l\zeta_{0,l}$ as an operator on $B_1$ by left multiplication. Set 
$$
\nabla=d+\sum_{l=1}^\infty\theta_l\zeta_{0,l}.
$$
Clearly, $\nabla^2=0$. 

Let $\Theta: B_0\to B_0$ be the differential operator on $B_0$ determined by 
$$
\Theta\theta_l=\Theta h_{k,l}=\Theta \zeta_{k,l}=0,
$$
$\Theta e_{\emptyset}=0$, and for $I\neq \emptyset$,
$$
\Theta e_I=\sum_{1\leq k\leq s}(-1)^{k-1}e_{\{\cdots,\hat{i_k},\cdots\}},~I=\{i_1,\cdots,i_s\},~ i_1<\cdots<i_s.
$$
It is easy to verify $\Theta^2=0$. Again, $\Theta$ extends uniquely a continuous operator over $B_1$, and it satisfies $\Theta^2=0$. 
For each $s\in \N$, we define a continuous additive operator $\mathrm{Shift}_s$ as follows: For $s=0$, $\mathrm{Shift}_0$ is determined by
$$
\theta_l\mapsto\theta_l,~e_I\mapsto e_I,~h_{k,l}\mapsto h_{k+1,l},~\zeta_{k,l}\mapsto\zeta_{k+1,l}.
$$
For $s>0$, $\mathrm{Shift}_s$ is determined by
$$
\theta_l\mapsto\theta_l,~e_I\mapsto e_I,~h_{k,l}\mapsto\left\{\begin{matrix}h_{k,l},~k<s\\
 h_{s,l}+h_{s+1,l},~k=s\\
 h_{k+1,l},~k>s
\end{matrix}\right.,~\zeta_{k,l}\mapsto\left\{\begin{matrix}
\zeta_{k,l},~k<s\\
\zeta_{k+1,l},~k\geq s.
\end{matrix}\right.
$$
 We linearly extend $\nabla, \Theta, \mathrm{Shift}_s, s\in \N$ to $B_{\mathrm{HdR}}$. The symbol $\mathrm{exp}(\sum_{l=0}^\infty\theta_l h_{1,l})$ is a well-defined element 
$$
1+\sum_{l=0}^\infty\theta_l h_{1,l}+\frac{1}{2}(\sum_{l=0}^\infty\theta_l h_{1,l})^2+\cdots
$$
in $B^{\Q}_{\mathrm{HdR}}$, and we regard it as an operator on $B^{\Q}_{\mathrm{HdR}}$ by left multiplication. Set 
$$
\delta_0:=\mathrm{exp}(\sum_{l=0}^\infty\theta_l h_{1,l})\mathrm{Shift}_0,\quad \delta_s:=\mathrm{Shift}_s, \ s>0. 
$$ 
Finally, we define the operator $D:\mathrm{Hom}_{\mathrm{Set}}(\mathbb Z\times\mathbb Z,B_{\mathrm{HdR}})\to\mathrm{Hom}_{\mathrm{Set}}(\mathbb Z\times\mathbb Z,B_{\mathrm{HdR}})$ by sending $\varphi=\{\varphi(r,s)\}\in \mathrm{Hom}_{\mathrm{Set}}(\mathbb Z\times\mathbb Z,B_{\mathrm{HdR}})$
to 
$$ 
(r,s)\mapsto(\nabla\varphi(r,s-1)+(-1)^{s}\sum^r_{k=0}(-1)^k\delta_k\varphi(r-1,s)-\Theta\varphi(r,s)).
$$
\begin{theorem}\label{solvability}
The following two-dimensional discrete initial value problem over $B_{\mathrm{HdR}}$ is solvable:
\begin{eqnarray}\label{initial problem}
\left\{\begin{matrix}
D\varphi=0,~\varphi\in\mathrm{Hom}_{\mathrm{Set}}(\mathbb Z\times\mathbb Z,B_{\mathrm{HdR}});\\
\varphi(0,s)=\sum_{I\subset\mathbb N_{>0},|I|=s}e_I\zeta_{0,I},~s\geq0;\\
\varphi(r,s)=0,~r<0~\mathrm{or}~s<0.
\end{matrix}\right.
\end{eqnarray}
Here $\zeta_{0,\emptyset}=1$, and $\zeta_{0,I}:=\zeta_{0,i_1}\cdots\zeta_{0,i_s}$ for $I=\{i_1,\cdots,i_s\}$ with $i_1<\cdots<i_s$.   
\end{theorem}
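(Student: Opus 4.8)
The plan is to construct the solution $\varphi$ recursively in the lexicographic order of $(r,s)$, inverting the operator $\Theta$ at each step by means of a contracting homotopy. Rewrite the equation $D\varphi=0$ as $\Theta\varphi(r,s)=\Psi(r,s)$, where
$$
\Psi(r,s):=\nabla\varphi(r,s-1)+(-1)^{s}\sum_{k=0}^{r}(-1)^{k}\delta_{k}\varphi(r-1,s);
$$
$\Psi(r,s)$ involves only values $\varphi(r',s')$ with $(r',s')<_{\mathrm{lex}}(r,s)$. On the axes the values are prescribed: everything vanishes for $r<0$ or $s<0$, and for $r=0$ one checks directly that the prescribed $\varphi(0,s)=\sum_{|I|=s}e_{I}\zeta_{0,I}$ already satisfies $D\varphi(0,s)=0$. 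Indeed $d$ annihilates $\varphi(0,s-1)$, so $\nabla\varphi(0,s-1)=\big(\sum_{l}\theta_{l}\zeta_{0,l}\big)\varphi(0,s-1)=\sum_{|J|=s-1}\sum_{l}\theta_{l}e_{J}\,\zeta_{0,l}\zeta_{0,J}$; the summands with $l\in J$ vanish because $\zeta_{0,l}$ is repeated, and after reindexing $I=J\cup\{l\}$ the rest equals $\Theta\varphi(0,s)$. It therefore remains, for $r\ge1$ and each $s\ge0$ in turn, to solve $\Theta\varphi(r,s)=\Psi(r,s)$.

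This is possible because $\Theta$ is acyclic on the ideal $\mathfrak a\subset B_{\mathrm{HdR}}$ generated by the $e_{I}$; this is precisely where inverting the $\theta_{i}$ is used. Indeed, modulo the inert $h$- and $\zeta$-factors, $\Theta|_{\mathfrak a}$ is the Koszul differential contracting against $\sum_{i}\theta_{i}e_{i}^{\ast}$, and since $\theta_{1}$ is a unit of $B_{\mathrm{HdR}}$ this complex is contractible: the operator
$$
K\Big(\sum_{I}e_{I}n_{I}\Big):=\theta_{1}^{-1}\sum_{1\notin I}e_{\{1\}\cup I}\,n_{I}
$$
is well defined (by uniqueness of the expansion $\sum_{I}e_{I}n_{I}$), visibly continuous (it leaves the $h,\zeta$-part untouched), and satisfies $\Theta K+K\Theta=\mathrm{id}$ on $\mathfrak a$, so that $\mathrm{Im}\,\Theta=\ker\Theta\cap\mathfrak a$. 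I then set $\varphi(r,s):=K\Psi(r,s)$ for $r\ge1$; this gives $\Theta\varphi(r,s)=\Psi(r,s)$ provided $\Psi(r,s)\in\mathfrak a$ and $\Theta\Psi(r,s)=0$. The first is automatic: $\mathfrak a$ is an ideal stable under $d$, under left multiplication by $\sum_{l}\theta_{l}\zeta_{0,l}$ and by $\exp(\sum_{l}\theta_{l}h_{1,l})$, and under every $\mathrm{Shift}_{s}$ (all of which fix the $e_{I}$), so each previously constructed $\varphi(r',s')$ — and hence $\Psi(r,s)$ — lies in $\mathfrak a$.

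The vanishing $\Theta\Psi(r,s)=0$ is the heart of the argument and relies on the structural relations $\nabla^{2}=\Theta^{2}=0$ (already known), the commutations $\Theta\nabla=\nabla\Theta$, $\Theta\delta_{k}=\delta_{k}\Theta$, $\nabla\delta_{k}=\delta_{k}\nabla$, and the cosimplicial identities $\delta_{k}\delta_{j}=\delta_{j}\delta_{k-1}$ for $k>j$. All but $\nabla\delta_{0}=\delta_{0}\nabla$ follow immediately from the definitions, using that $\Theta$ is an even derivation killing $\theta_{l},h_{k,l},\zeta_{k,l}$ and that the shifts are compatible with products; for $\nabla\delta_{0}=\delta_{0}\nabla$ the factor $\exp(\sum_{l}\theta_{l}h_{1,l})$ is exactly the twist compensating the failure of $\mathrm{Shift}_{0}$ to commute with left multiplication by $\sum_{l}\theta_{l}\zeta_{0,l}$: one computes $\nabla\big(\exp(\sum_{l}\theta_{l}h_{1,l})x\big)=\exp(\sum_{l}\theta_{l}h_{1,l})\big(dx+(\sum_{l}\theta_{l}\zeta_{1,l})x\big)$ and uses $\mathrm{Shift}_{0}\big((\sum_{l}\theta_{l}\zeta_{0,l})x\big)=(\sum_{l}\theta_{l}\zeta_{1,l})\,\mathrm{Shift}_{0}x$. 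Granting all this, one expands $\Theta\Psi(r,s)$, commutes $\Theta$ past $\nabla$ and the $\delta_{k}$, substitutes the inductive hypotheses $\Theta\varphi(r,s-1)=\Psi(r,s-1)$ and $\Theta\varphi(r-1,s)=\Psi(r-1,s)$, and collects terms: the $\nabla^{2}$-term drops, the two mixed $\nabla\delta_{k}$-terms cancel by $\nabla\delta_{k}=\delta_{k}\nabla$, and the remaining double sum $\sum_{k=0}^{r}\sum_{j=0}^{r-1}(-1)^{k+j}\delta_{k}\delta_{j}\varphi(r-2,s)$ — a composite of two consecutive alternating sums of cofaces — vanishes by the cosimplicial identities. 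This closes the recursion. The only real difficulty I anticipate is bookkeeping: verifying the commutation and cosimplicial identities for the $\mathrm{Shift}_{s}$, with their case split $k<s$, $k=s$, $k>s$, and for the twisted operator $\delta_{0}$, and confirming that $K$ and the other operators involved respect the topology defining $B_{\mathrm{HdR}}$, so that the recursion stays within the ring.
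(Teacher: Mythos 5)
Your argument is essentially correct, but it proves the theorem by a genuinely different route than the paper. The paper's proof is constructive: it writes down the closed formula $\varphi_\infty(r,s)$ of Definition~\ref{varphi construction} and verifies $D\varphi_\infty=0$ monomial by monomial, via the perfect pairing $<,>_B$, the adjoint operators $\nabla^*$ and $\delta_k^*$ of Lemma~\ref{adjoint}, and the term-by-term computation $M_k=A_k+B_k^++B_k^-$, which is then summed over $k$ and matched against $\Theta\varphi_\infty$. You instead prove bare existence by a lexicographic recursion: rewrite \eqref{initial problem} as $\Theta\varphi(r,s)=\Psi(r,s)$, check the $r=0$ row directly, and invert $\Theta$ on the closed ideal $\mathfrak a=(e_I)$ by the Koszul contraction $K$, the solvability condition $\Theta\Psi(r,s)=0$ coming from $\nabla^2=0$, the commutations $\Theta\nabla=\nabla\Theta$, $\Theta\delta_k=\delta_k\Theta$, $\nabla\delta_k=\delta_k\nabla$, and the identities $\delta_k\delta_j=\delta_j\delta_{k-1}$ for $j<k$. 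I checked these relations on generators (including the only delicate one, $\nabla\delta_0=\delta_0\nabla$, where the twist $\mathrm{exp}(\sum_l\theta_lh_{1,l})$ enters exactly as you say, and the simplicial identities for the shifts, where the merge $h_{k,l}\mapsto h_{k,l}+h_{k+1,l}$ telescopes correctly under $d$), and they do hold, so the double sum cancels by the usual simplicial bookkeeping and the recursion closes. What each approach buys: yours is shorter and isolates the formal structure (a cosimplicial direction $\delta_k$, a de Rham direction $\nabla$, an acyclic Higgs direction $\Theta$), and it suffices for the literal statement of Theorem~\ref{solvability}; the paper's computation buys the explicit solution $\varphi_\infty$, whose coefficients $a(\underline{\underline j},\underline s)$ are visibly $p$-integral, which is exactly what the subsequent subsection needs to reduce to $B^{\mathbb Z_{(p)}}_{\mathrm{HdR},p}$ and to evaluate geometrically (Theorem \ref{infty homotopy}). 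A solution manufactured by $K$ (which privileges the index $1$ and inserts $\theta_1^{-1}$) carries neither canonicity nor denominator control, so your proof establishes solvability but cannot substitute for the paper's proof in the role the theorem plays downstream.

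Two points you should make explicit. First, your base case and your description of $\Theta|_{\mathfrak a}$ as contraction against $\sum_i\theta_ie_i^*$ (and the $\theta_1^{-1}$ in $K$) presuppose the weighted operator $\Theta e_I=\sum_k(-1)^{k-1}\theta_{i_k}e_{I\setminus\{i_k\}}$; the paper's displayed definition omits the factors $\theta_{i_k}$, but the weighted version is evidently the intended one (it matches the formula for $\theta(x)$ in \S1 and is forced already by the $r=0$ equation, which is unsolvable for the unweighted operator), so state that you are using it. Second, record that $K$, $\Theta$, $\nabla$ and the $\delta_k$ are continuous for the topology defining $B_{\mathrm{HdR}}$ and preserve the closed ideal $\mathfrak a$; this is routine given the unique monomial expansions that the paper itself invokes to define $<,>_B$, but it is where your recursion actually stays inside the ring.
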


\section{The proof of Theorem \ref{solvability}}
First we make a table of notations as follows:\\

\noindent {\bf Notation:}
\begin{itemize}
\item[$\overline i$\ ] $\{i^k\}_{k\in\mathbb Z_{>0}}$ such that $i^k\geq0$ and there are only finitely many $k\in\mathbb Z_{>0}$ such that $i^k>0$.

\item[$\underline i$\ ] $\{i_l\}_{l\in\mathbb Z_{>0}}$ such that $i_l\geq0$ and there are only finitely many $l\in\mathbb Z_{>0}$ such that $i_l>0$.

\item[$\underline{\underline j}$\ ] It is a family $\{j_{k,l}\}_{(k,l)\in\mathbb Z_{>0}\times\mathbb Z_{>0}}$ such that $j_{k,l}\geq0$ holds for any pair $(k,l)$ and $j_{k,l}>0$ holds for only finitely many pair $(k,l)$.
\item[$\underline s$\ ] $\{s_l\}_{l\in\mathbb Z_{\geq0}}$ such that each $s_l\geq0$ and $s_l>0$ holds only for finitely many $l$.

\item[$\eta^k_l$\ ] We set $\eta^k_l:=1$ if $k=l$ and $0$ otherwise.

\item[$\underline \eta^k$\ ]  $\{\eta_l^k\}_{l\in\mathbb Z_{\geq0}}$.

\item[$\underline{\underline{j}}_{\overline{i}}\ $] $\overline{\underline{j}}_{\overline{i}}:=\prod_kj_{k,i^k}$,  here $j_{k,0}:=1$.

\item[$\theta^{\underline{\underline j}}\ $] $\prod_{l=1}^{\infty}\theta_{l}^{\sum_k j_{k,l}}$.

\item[$\theta_{\overline i}$\ ] $\theta_{\overline i}:=\prod_k\theta_{i^k}$, here $\theta_0:=1$.

\item[$h^{[\underline{\underline j}]}$\ ] $h^{[\underline{\underline j}]}:=\prod_{k,l}\frac{h_{k,l}^{j_{k,l}}}{j_{k,l}!}$.

\item[$\cup\underline i$\ ] $\cup\underline i=\{i_l>0|l\in\mathbb Z_{>0}\}$.

\item[$\cup_q\underline{\underline j}$\ ] For any $q>0$,  we set $\cup_q\underline{\underline j}:=\{\tilde{j}_{k,l}\}$ and 
$$
\tilde{j}_{k,l}=\left\{
\begin{array}{cc}
j_{k,l},&k<q,\\
j_{q,l}+j_{q+1,l},&k=q,\\
j_{k+1,l},&k>q.
\end{array}
\right.
$$

\item[$e_{\underline i,\overline i}$\ ] Let $\underline I:=\{i_l>0|l\in\mathbb Z_{>0}\}$ and $\overline I:=\{i^k>0|k\in\mathbb Z_{>0}\}$. Assume $\underline I=\overline I=\emptyset$. Set $e_{\underline i,\overline i}:=e_\emptyset$. Assume $\underline I\neq\emptyset,~\overline I\neq\emptyset$. Set
$$
\underline I=\{i_{l_1},\cdots,i_{l_s}\}~\mathrm{with}~l_1<\cdots<l_s,~\overline I=\{i^{k_1},\cdots,i^{k_r}\}~\mathrm{with}~k_1<\cdots<k_r.
$$
If $|\underline I\cup\overline I|<r+s$, then set $e_{\underline i,\overline i}:=0$. If $|\underline I\cup\overline I|=r+s$, then set
$$
e_{\underline i,\overline i}:=\mathrm{sgn}(\sigma)e_{\underline I\cup\overline I},
$$
where $\sigma$ is a permutation of $\{1,\cdots,r+s\}$ such that
$$
i'_{\sigma(1)}<\cdots<i'_{\sigma(r+s)},~(i'_1,\dots,i'_{r+s}):=(i_{l_1},\cdots,i_{l_s},i^{k_1},\cdots,i^{k_r}).$$
The remaining two cases $\underline I\neq\emptyset,\overline I=\emptyset$ and $\underline I=\emptyset,\overline I\neq\emptyset$ can be discussed similarly.

\item[$c(i,\underline{i})\ $] Assume $i\in\cup\underline i$. Set $c(i,\underline{i}):=l_0-1$, where $l_0$ is the smallest $l$ such that $i=i_l$.

\item[$\underline{\underline j}(p,q)\ $] $\{j_{k,l}\}$ such that $j_{k,l}=1$ if $(k,l)=(p,q)$ and $0$ otherwise.

\item[$a(\underline{\underline j},\underline s)$] $\frac{\prod_ls_l!}{\prod^\infty_{p=1}\prod^{s_{p-1}}_{q=0}\mathrm{max}\{1,q+\sum_{k\geq p,l}j_{k,l}+\sum_{l\geq p}s_l\}}$.

\item[$\mathbb Z^{\underline s}_{>0,\uparrow}$\ ] Let $s:=\sum_l s_l>0$. If $s=0$, then put $\mathbb Z^{\underline s}_{>0,\uparrow}$ to be the set of single sequence $\underline 0=\{0\}_{l\in\mathbb Z_{>0}}$. Assume $s>0$.
Let $\{l|s_l>0\}=\{l_1,\cdots,l_q\}$ with $l_1<\cdots<l_q$. Put $\mathbb Z^{\underline s}_{>0,\uparrow}$ to be the set of sequences
\begin{eqnarray}\label{underline i}
i_{l_1,1},\cdots,i_{l_1,s_{l_1}},\cdots,i_{l_q,1},\cdots,i_{l_q,s_{l_q}},0,\cdots,0,\cdots
\end{eqnarray}
satisfying $i_{l_1,1}<\cdots<i_{l_1,s_{l_1}},\cdots,i_{l_q,1}<\cdots<i_{l_q,s_{l_q}}$.
\item[$\zeta_{\underline s,\underline i}$\ ] Let $s=\sum_{l\geq 0}s_l$. If $s=0$, we set $\zeta_{\underline s,\underline i}:=1$. Assume $s>0$. Let $\{l|s_l>0\}=\{l_1,\cdots,l_q\}$ with $l_1<\cdots<l_q$ and let 
$$\underline i=\{i_{l_1,1},\cdots,i_{l_1,s_{l_1}},\cdots,i_{l_q,1},\cdots,i_{l_q,s_{l_q}},0,\cdots,0,\cdots\}\in\mathbb Z^{\underline s}_{>0,\uparrow}.$$ 
Set 
$$\zeta_{\underline s,\underline i}:=\zeta_{l_1,i_{l_1,1}}\cdots\zeta_{l_1,i_{l_1,s_{l_1}}}\cdots\zeta_{l_q,i_{l_q,1}}\cdots\zeta_{l_q,i_{l_q,s_{l_q}}}.$$

\item[$T(r)$\ ] $\{i^k\}_{k\in\mathbb Z_{>0}}$ such that $i^k>0$ for $k\leq r$ and $i^k=0$ for $k>r$.
\item[$T(r,s)$\ ] For any $r,s\geq0$, let $T(r,s)$ be the set consisting of triples $(\underline{\underline j},\underline s,\underline i)$ satisfying the following conditions:
\begin{itemize}
\item $j_{k,l}=s_k=0$ for $k>r$;
\item $\underline i\in\mathbb Z^{\underline s}_{>0,\uparrow}$.
\end{itemize}

\item[$T_q(r,s)$\ ] The subset of $T(r,s)$ consisting of $(\underline{\underline j},\underline s,\underline i)$ such that $\sum_{k,l}j_{k,l}+\sum_ls_l<q$.

\item[$\mathrm{Del}_q(\underline{\underline{j}})\ $] For $q\geq 1$, we set $\mathrm{Del}_q(\underline{\underline{j}}):=\{\tilde{j}_{k,l}\}$. Here $\tilde{j}_{k,l}=j_{k,l}$ for $k<q$ and $\tilde{j}_{k,l}=j_{k+1,l}$ for $k\geq q$.

\item[$\mathrm{Del}_q(\underline s)$\ ] For any $q\geq0$, we set $\mathrm{Del}_q(\underline s):=\{\tilde s_k\}$ to be the sequence obtained from $\underline s$ by deleting the term $s_q$. More precisely, we have $\tilde s_k=s_k$ for $k<q$ and $\tilde s_k=s_{k+1}$ for $k\geq0$.

\item[$\mathrm{Del}^q(\overline i)$\ ] The sequence $i^1\cdots,\widehat{i^q},\cdots,i^r,\cdots$ which is obtained from $\overline i$ by deleting the $q$-th term $i^q$.

\item[$\mathrm{Del}(i,\underline i)$\ ] Assume $i\in\cup\underline i$. Set $\mathrm{Del}(i,\underline i):=\mathrm{Del}_{c(i,\underline i)+1}(\underline i)$.
\end{itemize}
Next, we provide our solution to  \eqref{initial problem} as follows:
\begin{definition}\label{varphi construction}
For any $r,s\geq0$, we define
$$
\varphi_\infty(r,s):=\sum_{(\underline{\underline j},\underline s,\underline i)\in T(r,s)}[\sum_{\overline i\in T(r)}a(\underline{\underline j},\underline s)\underline{\underline j}_{\overline i}\theta^{-1}_{\overline i}\theta^{\underline{\underline j}}e_{\underline i,\overline i}]h^{[\underline{\underline j}]}\zeta_{\underline s,\underline i}.
$$
\end{definition}
The verification that $\varphi_\infty$ satisfies \eqref{initial problem} boils down to the following family of equalities:
$$
\nabla\varphi_\infty(r,s)+(-1)^{s+1}\sum^r_{k=0}(-1)^k\delta_k\varphi_\infty(r-1,s+1)=\Theta\varphi_\infty(r,s+1),~r\geq0,~s\geq-1.
$$

Let $V$ be the $\mathbb Q$-vector subspace of $B_{\mathrm{HdR}}$ generated by the non-zero monomials
$h^{[\underline{\underline j}]}\zeta_{\underline s,\underline i}$. Let $S$ be the closure of the subring of $B_{\mathrm{HdR}}$ generated by indeterminants  $\theta_l,e_I$. Define a pairing
$$
<,>:V\otimes_\mathbb Q V\to\mathbb Q
$$
by sending a non-zero $h^{ [\underline{\underline j}]}\zeta_{\underline s,\underline i}\otimes h^{[\underline{\underline j'}]}\zeta_{\underline s',\underline i'}$ to $1$ if $\underline{\underline j}=\underline{\underline j'},\underline s=\underline s',\underline i=\underline i'$ and $0$ otherwise. It is obvious that this pairing is perfect. Let $<,>_S$ be the linear extension of $<,>$ to $V_S$. Observe that any element in $ B_{\mathrm{HdR}}$ can be uniquely written as
$$
\sum b^{\underline{\underline j}}_{\underline s,\underline i}h^{[\underline{\underline j}]}\zeta_{\underline s,\underline i},~b^{\underline{\underline j}}_{\underline s,\underline i}\in S.
$$
This observation allows us to define an $S$-linear pairing
$$
<,>_B:B_{\mathrm{HdR}}\otimes_SV_S\to S,~(\sum b^{\underline{\underline j}}_{\underline s,\underline i}h^{[\underline{\underline j}]}\zeta_{\underline s,\underline i})\otimes v\mapsto\sum b^{\underline j}_{\underline s,\underline i}<h^{[\underline{\underline j}]}\zeta_{\underline s,\underline i},v>_S.
$$
Note that this pairing induces an injective $\mathbb Q$-linear map
\begin{eqnarray}\label{pairing}
B_{\mathrm{HdR}}\to\mathrm{Hom}_\mathbb Q(V,S),~b\mapsto<b,->_B.
\end{eqnarray}

Thanks to the lemma below and \eqref{pairing} is injective, the equalities above follows from the coincidence of 
\begin{eqnarray}\label{nabla LHS}
\quad\quad<\varphi_\infty(r,s),\nabla^*(h^{[\underline{\underline j}]}\zeta_{\underline s,\underline i})>+<(-1)^{s+1}\varphi_\infty(r-1,s+1),\sum^r_{k=0}(-1)^k\delta_k^*(h^{[\underline{\underline j}]}\zeta_{\underline s,\underline i})>
\end{eqnarray}
and
\begin{eqnarray}\label{Theta RHS}
<\Theta\varphi_\infty(r,s+1),h^{[\underline{\underline j}]}\zeta_{\underline s,\underline i}>
\end{eqnarray}
for any $r\geq0,s\geq-1$ and any non-zero $h^{[\underline{\underline j}]}\zeta_{\underline s,\underline i}$. Clearly, it suffices to check that the expressions above coincide for $h^{[\underline{\underline j}]}\zeta_{\underline s,\underline i},(\underline{\underline j},\underline s,\underline i)\in T(r,s+1)$.

\begin{lemma}\label{adjoint}
The operators $\nabla,\delta_k$ on $B_{\mathrm{HdR}}$ have respective adjoints $\nabla^*,\delta_k^*$ on $V_S$ with respect to the pairing $<,>_B$. 
\end{lemma}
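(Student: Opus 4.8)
The plan is to construct $\nabla^*$ and $\delta_k^*$ explicitly on $V_S$, by dualizing the action of each operator on the monomial basis $\{h^{[\underline{\underline j}]}\zeta_{\underline s,\underline i}\}$ of $V$. Observe first that each of $\nabla$, $\delta_k$ restricts, modulo the ideal generated by the $e_I$ and after extending scalars to $S$, to an $S$-linear operator whose effect on a basis monomial $h^{[\underline{\underline j}]}\zeta_{\underline s,\underline i}$ is a \emph{finite} $S$-linear combination of basis monomials of the same sort (this is clear for $\mathrm{Shift}_s$ and for $d$ from its defining rules $dh_{k,l}=\zeta_{k,l}-\zeta_{k-1,l}$; the extra terms $\sum_l\theta_l\zeta_{0,l}\wedge(-)$ in $\nabla$ and $\mathrm{exp}(\sum_l\theta_l h_{1,l})$ in $\delta_0$ contribute coefficients in $S$). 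Write each such operator as $O(h^{[\underline{\underline j}]}\zeta_{\underline s,\underline i})=\sum_{(\underline{\underline j}',\underline s',\underline i')} c^{O}_{(\underline{\underline j},\underline s,\underline i),(\underline{\underline j}',\underline s',\underline i')}\, h^{[\underline{\underline j}']}\zeta_{\underline s',\underline i'}$ with $c^O_{\bullet,\bullet}\in S$. One then \emph{defines} the adjoint by transposing the coefficient array: $O^*(h^{[\underline{\underline j}']}\zeta_{\underline s',\underline i'}):=\sum_{(\underline{\underline j},\underline s,\underline i)} c^{O}_{(\underline{\underline j},\underline s,\underline i),(\underline{\underline j}',\underline s',\underline i')}\, h^{[\underline{\underline j}]}\zeta_{\underline s,\underline i}$.

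The first point to check is that this sum is again finite, so that $O^*$ genuinely lands in $V_S$ rather than in a completion: for a fixed target monomial $h^{[\underline{\underline j}']}\zeta_{\underline s',\underline i'}$, only finitely many source monomials can map to it under $O$, because $O$ either preserves or strictly increases the total $h,\zeta$-degree and the supports involved are bounded by that of the target. Concretely, for $\mathrm{Shift}_s$ the preimage of a given monomial is essentially unique (the shift is injective on indices), and for the multiplication operators the Leibniz/exponential expansions only enlarge the support in controlled ways, so finiteness is immediate. The second point is the adjunction identity $\langle O b, v\rangle_B=\langle b, O^* v\rangle_B$ for all $b\in B_{\mathrm{HdR}}$ and $v\in V_S$; by $S$-bilinearity and continuity of $\langle\,,\,\rangle_B$ it suffices to verify it on $b=h^{[\underline{\underline j}]}\zeta_{\underline s,\underline i}$ times an element of $S$ and $v=h^{[\underline{\underline j}']}\zeta_{\underline s',\underline i'}$, where both sides unwind to the single coefficient $c^O_{(\underline{\underline j},\underline s,\underline i),(\underline{\underline j}',\underline s',\underline i')}$ by the definition of the pairing and the perfectness of $\langle\,,\,\rangle$ on $V$.

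The one genuinely delicate point — and the step I expect to be the main obstacle — is \emph{continuity/well-definedness in the completed setting}: an arbitrary $b\in B_{\mathrm{HdR}}$ is an \emph{infinite} sum $\sum b^{\underline{\underline j}}_{\underline s,\underline i}h^{[\underline{\underline j}]}\zeta_{\underline s,\underline i}$, and one must make sure that $\langle O b,-\rangle_B$ computed termwise agrees with first applying $O$ (which is continuous on $B_{\mathrm{HdR}}$ by construction) and then pairing. Here one uses that, for a fixed target $v$, only the finitely many terms of $b$ lying in a bounded range of $h,\zeta$-degree can contribute to $\langle Ob,v\rangle_B$ — precisely because $O$ is degree-nondecreasing in the $\mathfrak b_m$-filtration and $\langle\,,\,\rangle_B$ kills $\mathfrak b_m$ for $m$ large relative to $v$ — so the infinite sum truncates and the termwise computation is legitimate. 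Once this finiteness-after-truncation observation is in place, the adjunction identity for general $b$ follows from the already-established identity on basis monomials by passing to the (finite, relevant) partial sums. Applying all of this to $O=\nabla$ and to $O=\delta_k$ for each $k$ gives the desired $\nabla^*$ and $\delta_k^*$.
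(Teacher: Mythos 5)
Your proposal is correct and takes essentially the same route as the paper: the paper likewise defines $\nabla^*$ and $\delta_k^*$ monomial-by-monomial against the basis $h^{[\underline{\underline j}]}\zeta_{\underline s,\underline i}$ of $V$, the only difference being that it records the resulting explicit closed formulas (for instance $\delta_k^*$ kills monomials with $s_k\neq 0$ and otherwise merges the $k$-th and $(k+1)$-st rows of $\underline{\underline j}$), which are exactly what the subsequent computation of $M_k$ consumes. One small caveat: your assertion that each operator sends a basis monomial to a \emph{finite} $S$-combination of monomials fails for $\delta_0$ because of the factor $\mathrm{exp}(\sum_l\theta_l h_{1,l})$, but this is harmless since your argument only needs column-finiteness (finitely many sources for a fixed target monomial), which you correctly justify by bounded supports and essential uniqueness of preimages rather than by the degree-nondecreasing property alone, which by itself would not suffice since infinitely many monomials share a given $(h,\zeta)$-degree.
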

\begin{proof}
Let us construct $\nabla^*$. Given any non-zero $h^{[\underline{\underline j}]}\zeta_{\underline s,\underline i}\in V$. For any $k\geq0$, set $\nabla^*_k(h^{[\underline{\underline j}]}\zeta_{\underline s,\underline i})$ to be
$$
\left\{
\begin{matrix}
\sum_{q=1}^{s_0}(-1)^{c(i_{0,q},\underline i)}[\theta_{i_{0,q}}h^{[\underline{\underline j}]}\zeta_{\underline s-\underline\eta^0,\mathrm{Del}(i_{0,q},\underline i)}-h^{[\underline{\underline j}+\underline{\underline j}(1,{i_{0,q}})]}\zeta_{\underline s-\underline\eta^0,\mathrm{Del}(i_{0,q},\underline i)}],&k=0,s_0>0,\\
\sum_{q=1}^{s_k}(-1)^{c(i_{k,q},\underline i)}[h^{[\underline{\underline j}+\underline{\underline j}(k,i_{k,q})]}\zeta_{\underline s-\underline\eta^k,\mathrm{Del}(i_{k,q},\underline i)}-h^{[\underline{\underline j}+\underline{\underline j}(k+1,{i_{k,q}})]}\zeta_{\underline s-\underline\eta^k,\mathrm{Del}(i_{k,q},\underline i)}],&k>0,s_k>0,\\
0,&s_k=0.
\end{matrix}
\right.
$$
Put $\nabla^*:=\sum_{k=0}^\infty\nabla^*_k$. By the definition of $\nabla$, one can easily check that $\nabla^*$ is the unique adjoint of $\nabla$ with respect to $<,>_B$.

Turn to the construction of $\delta_k^*$.  Given any non-zero $h^{[\underline{\underline j}]}\zeta_{\underline s,\underline i}\in V$. Set $\delta^*_k(h^{[\underline j]}\zeta_{\underline s,\underline i}):=0$ if $s_k\neq0$ and
$$
\delta^*_k(h^{[\underline j]}\zeta_{\underline s,\underline i}):=\left\{
\begin{matrix}
\theta^{\underline j_1}h^{[\mathrm{Del}_1(\underline{\underline j})]}\zeta_{\mathrm{Del}_0(\underline s),\underline i},&\mathrm{if}~k=0,s_0=0,\\
h^{[\cup_k\underline{\underline j}]}\zeta_{\mathrm{Del}_k(\underline s),\underline i},&\mathrm{if}~k>0,s_k=0.
\end{matrix}
\right.
$$
Here $\theta^{\underline j_1}=\prod_{l=1}^\infty\theta_l^{j_{1,l}}$.
By the construction of $\delta_k$, one can check that $\delta_k^*$ is the unique adjoint of $\delta_k$ with respect to $<,>_B$. 
\end{proof}

\begin{lemma}
Keep the notation above. We additionally assume that $r>0,s\geq0$. Then for any $0\leq k\leq r$ and any non-zero $h^{[\underline{\underline j}]}\zeta_{\underline s,\underline i}, (\underline{\underline j},\underline s,\underline i)\in T(r,s+1)$, we have
$$
\begin{array}{rcl}
M_k&:=&<\varphi(r,s),\nabla_k^*(h^{[\underline{\underline j}]}\zeta_{\underline s,\underline i})>+<(-1)^{s+1+q}\varphi(r-1,s+1),\delta_k^*(h^{[\underline{\underline j}]}\zeta_{\underline s,\underline i})>\\
&=&A_k+B_k^{+}+B_k^{-},\\
\\
A_k&:=&\left\{\begin{matrix}
\sum_{\overline i\in T(r),1\leq q\leq s_k} a(\underline{\underline j},\underline s)\underline{\underline j}_{\overline i}\theta^{-1}_{\overline i}\theta^{\underline{\underline j}}(-1)^{c(i_{k,q},\underline i)}\theta_{i_{k,q}}e_{\mathrm{Del}(i_{k,q},\underline i),\overline i},&\mathrm{if}~s_k>0;\\
0,&\mathrm{if}~s_k=0,
\end{matrix}
\right.\\
\\
B_k^{-}&:=&\left\{
\begin{array}{l}
0,~\mathrm{if}~k=0;\\
\sum_{\overline i\in T(r-1)}(-1)^{s+1+k}\mathrm{max}\{1-j_k,\sum_{k'\geq k+1}j_{k'}+\sum_{k'\geq k}s_{k'}\}a(\underline{\underline j},\underline s)\mathrm{Del}_k(\underline{\underline j})_{\overline i}\theta^{-1}_{\overline i}\theta^{\underline{\underline j}}e_{\underline i,\overline i},\\
\mathrm{if}~0<k\leq r;
\end{array}
\right.\\
\\
B_k^{+}&:=&\left\{
\begin{array}{l}
\sum\limits_{\overline i\in T(r-1)}(-1)^{s+1+k}\mathrm{max}\{1,\sum\limits_{k'\geq k+1}j_{k'}+\sum\limits_{k'\geq k+1}s_{k'}\}a(\underline{\underline j},\underline s)\mathrm{Del}_{k+1}(\underline{\underline j})_{\overline i}\theta^{-1}_{\overline i}\theta^{\underline{\underline j}}e_{\underline i,\overline i},\\
\mathrm{if}~0\leq k<r;\\
0,~\mathrm{if}~k=r.
\end{array}
\right.
\end{array}
$$
Here $\underline i$ is written as \eqref{underline i} and $j_{k'}:=\sum_{l\geq1}j_{k',l}$. For other notations involved in the expressions above, the reader may refer to the notation table located at the beginning of this subsection.
\end{lemma}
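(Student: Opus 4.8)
The identity is a matter of unwinding definitions. I would substitute the explicit formula of Definition \ref{varphi construction} for $\varphi(r,s)$ and $\varphi(r-1,s+1)$ into the two pairings making up $M_k$, plug in the closed expressions for $\nabla_k^*$ and $\delta_k^*$ from Lemma \ref{adjoint}, and use that $\langle b,-\rangle_B$ just reads off the $S$-coefficients of $b$ in its canonical expansion along the monomials $h^{[\underline{\underline j'}]}\zeta_{\underline s',\underline i'}$. Everything then reduces to (i) the membership conditions determining which monomials actually occur in $\varphi(r,s)$ and $\varphi(r-1,s+1)$, i.e.\ when a triple lies in $T(r,s)$ or $T(r-1,s+1)$, and (ii) a few closed-form rules for how $a(\underline{\underline j},\underline s)$ changes under $\underline s\mapsto\underline s-\underline\eta^k$, $\underline{\underline j}\mapsto\underline{\underline j}+\underline{\underline j}(m,i)$, $\underline s\mapsto\mathrm{Del}_q(\underline s)$, $\underline{\underline j}\mapsto\cup_k\underline{\underline j}$, each proved by comparing the defining product $\prod_{p,q}\max\{1,\cdots\}$ factor by factor. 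The scalars $\max\{1,\cdots\}$ and $\max\{1-j_k,\cdots\}$ in $B_k^{\pm}$ are exactly the surviving factors in these comparisons; in particular the rule I would isolate first is $a(\cup_k\underline{\underline j},\mathrm{Del}_k(\underline s))=\max\{1,\sum_{k'\geq k+1}j_{k'}+\sum_{k'\geq k+1}s_{k'}\}\,a(\underline{\underline j},\underline s)$ when $s_k=0$, together with its counterpart for $a(\underline{\underline j}+\underline{\underline j}(m,i),\underline s-\underline\eta^k)$. The proof then splits according to whether $s_k=0$ or $s_k>0$, since exactly one of $\nabla_k^*,\delta_k^*$ is nonzero in each case.

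\textbf{The case $s_k=0$.} Here $\nabla_k^*(h^{[\underline{\underline j}]}\zeta_{\underline s,\underline i})=0$ and $A_k=0$, so it remains to show $(-1)^{s+1+k}\langle\varphi(r-1,s+1),\delta_k^*(h^{[\underline{\underline j}]}\zeta_{\underline s,\underline i})\rangle_B=B_k^{+}+B_k^{-}$. The operator $\delta_k^*$ produces the single monomial $h^{[\cup_k\underline{\underline j}]}\zeta_{\mathrm{Del}_k(\underline s),\underline i}$ ($k>0$), resp.\ $\theta^{\underline j_1}h^{[\mathrm{Del}_1(\underline{\underline j})]}\zeta_{\mathrm{Del}_0(\underline s),\underline i}$ ($k=0$), and in both cases the $\theta$-part reduces to $\theta^{\underline{\underline j}}$ because only column sums enter. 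I would then read off the $\varphi(r-1,s+1)$-coefficient of this monomial and apply the purely combinatorial identity $(\cup_k\underline{\underline j})_{\overline i}=\mathrm{Del}_{k+1}(\underline{\underline j})_{\overline i}+\mathrm{Del}_k(\underline{\underline j})_{\overline i}$ valid for $\overline i\in T(r-1)$, which simply expands the single merged factor $j_{k,i^k}+j_{k+1,i^k}$ of $\cup_k\underline{\underline j}$; together with the $a$-rule above this yields $B_k^{+}+B_k^{-}$, the two summands corresponding to the two pieces of the split.

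\textbf{The case $s_k>0$.} Now $\delta_k^*(h^{[\underline{\underline j}]}\zeta_{\underline s,\underline i})=0$, so $M_k=\langle\varphi(r,s),\nabla_k^*(h^{[\underline{\underline j}]}\zeta_{\underline s,\underline i})\rangle_B$, and I must show this equals $A_k+B_k^{+}+B_k^{-}$. For each $q$ with $1\leq q\leq s_k$, $\nabla_k^*$ contributes at most two monomials $h^{[\underline{\underline j'}]}\zeta_{\underline s-\underline\eta^k,\mathrm{Del}(i_{k,q},\underline i)}$ with $\underline{\underline j'}$ among $\underline{\underline j},\underline{\underline j}+\underline{\underline j}(1,i_{0,q})$ ($k=0$) or $\underline{\underline j}+\underline{\underline j}(k,i_{k,q}),\underline{\underline j}+\underline{\underline j}(k+1,i_{k,q})$ ($k>0$), each appearing only if the corresponding triple lies in $T(r,s)$, and with $\theta^{\underline{\underline j'}}=\theta_{i_{k,q}}\theta^{\underline{\underline j}}$ in the shifted cases. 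Reading off $\varphi(r,s)$-coefficients produces a double sum over $q$ and $\overline i\in T(r)$, carrying the sign $(-1)^{c(i_{k,q},\underline i)}$; the portion of this sum that keeps all $r$ upper indices gives $A_k$ after the $a$-rule for $a(\underline{\underline j}+\underline{\underline j}(m,i),\underline s-\underline\eta^k)$ is applied, and the remaining portion is reindexed by absorbing the value deleted by $\mathrm{Del}(i_{k,q},\underline i)$ back into the $\overline i$-string, turning each $e_{\mathrm{Del}(i_{k,q},\underline i),\overline i}$ with $\overline i\in T(r)$ into an $e_{\underline i,\overline i'}$ with $\overline i'\in T(r-1)$; grouping the reindexed terms according to whether the $\underline{\underline j}$-data becomes $\mathrm{Del}_{k+1}(\underline{\underline j})$ or $\mathrm{Del}_k(\underline{\underline j})$ yields $B_k^{+}$ and $B_k^{-}$. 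Here one must be careful that $\mathrm{Del}(i_{k,q},\underline i)$ deletes the \emph{first} occurrence of the value $i_{k,q}$ in $\underline i$ in its block form, not necessarily the one in block $k$, and that the reindexing carries a sign change matching $(-1)^{c(i_{k,q},\underline i)}$ into $(-1)^{s+1+k}$.

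\textbf{Boundary cases, and the main obstacle.} Finally one checks the boundary values: for $k=r$ the monomial carrying $\underline{\underline j}+\underline{\underline j}(k+1,\cdot)$ lies outside $T(r,s)$ and $h^{[\cup_r\underline{\underline j}]}\zeta$ lies outside $T(r-1,s+1)$ unless $j_r=0$, which is why $B_k^{+}=0$ in that case; and when $\sum_{k'\geq k+1}j_{k'}+\sum_{k'\geq k}s_{k'}=0$ while $j_k\geq1$ the scalar $\max\{1-j_k,\cdots\}$ equals $\max\{1-j_k,0\}=0$, and one checks this matches the genuine absence of the corresponding term, since then $h^{[\cup_k\underline{\underline j}]}\zeta\notin T(r-1,s+1)$ or $\mathrm{Del}_k(\underline{\underline j})_{\overline i}=0$ for all $\overline i$ in range, so the would-be $B_k^{-}$-contribution is truly absent. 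I expect the main obstacle to be not any single identity — the $a$-rules are routine once the right factor-by-factor comparison is set up — but the reindexing step of the $s_k>0$ case: making the bijection on index data precise, tracking its sign, and keeping the three families of indices $(\underline s,\underline i)$, $\underline{\underline j}$, $\overline i$ and the $T(\cdot,\cdot)$-membership conditions consistent throughout.
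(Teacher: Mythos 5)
Your proposal is correct and follows essentially the same route as the paper's own proof: pairing the explicit formula of Definition \ref{varphi construction} against the adjoint operators $\nabla_k^*,\delta_k^*$ from Lemma \ref{adjoint}, splitting into the cases $s_k>0$ and $s_k=0$ (with separate attention to $k=0$ and $k=r$), and using exactly the same combinatorial ingredients --- the rules for $a(\underline{\underline j}+\underline{\underline j}(m,i),\underline s-\underline\eta^k)$ and $a(\cup_k\underline{\underline j},\mathrm{Del}_k(\underline s))$, the expansion $(\cup_k\underline{\underline j})_{\overline i}=\mathrm{Del}_k(\underline{\underline j})_{\overline i}+\mathrm{Del}_{k+1}(\underline{\underline j})_{\overline i}$, and the reindexing of $\overline i\in T(r)$ with $i^k=i_{k,q}$ (resp.\ $i^{k+1}=i_{k,q}$) down to $T(r-1)$ with the accompanying sign bookkeeping. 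Your explicit handling of the degenerate situations (vanishing sums, the $\max\{1-j_k,\cdot\}$ factor, $k=r$) is consistent with, and slightly more explicit than, the cases the paper treats summarily.
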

\begin{proof}
This lemma can be checked case by case. (i) The case of $0<k<r,s_k>0$. By the proof Lemma \ref{adjoint}, $M_k$ equals the difference between 
$$
\sum_{q=1}^{s_k}\sum_{\overline i\in T(r)}(-1)^{c(i_{k,q},\underline i)}a(\underline{\underline j}+\underline{\underline j}(k,i_{k,q}),\underline s-\underline\eta^l)(\underline{\underline j}+\underline{\underline j}(k,i_{k,q}))_{\overline i}\theta^{-1}_{\overline i}\theta^{\underline{\underline j}+\underline{\underline j}(l,i_{k,q})}e_{\mathrm{Del}(i_{k,q},\underline i),\overline i}
$$
and
$$
\sum_{q=1}^{s_k}\sum_{\overline i\in T(r)}(-1)^{c(i_{k,q},\underline i)}a(\underline{\underline j}+\underline{\underline j}(k+1,i_{k,q}),\underline s-\underline\eta^k)(\underline{\underline j}+\underline{\underline j}(k+1,i_{k,q}))_{\overline i}\theta^{-1}_{\overline i}\theta^{\underline{\underline j}+\underline{\underline j}(k+1,i_{k,q})}e_{\mathrm{Del}(i_{k,q},\underline i),\overline i}.
$$
The following facts can be checked directly, we omit the details:
\begin{itemize}
\item $a(\underline{\underline j}+\underline{\underline j}(k,i_{k,q}),\underline s-\underline\eta^k)=\mathrm{max}\{1,\sum_{k'\geq k+1}j_{k'}+\sum_{k'\geq k}s_{k'}\}s_k^{-1}a(\underline{\underline j},\underline s)$;
\item $a(\underline{\underline j}+\underline{\underline j}(k+1,i_{k,q}),\underline s-\underline\eta^k)=\mathrm{max}\{1,\sum_{k'\geq k+1}j_{k'}+\sum_{k'\geq k+1}s_{k'}\}s_k^{-1}a(\underline{\underline j},\underline s)$;
\item $[a(\underline{\underline j}+\underline{\underline j}(k,i_{k,q}),\underline s-\underline\eta^k)-a(\underline{\underline j}+\underline{\underline j}(k+1,i_{k,q}),\underline s-\underline\eta^k)]\underline{\underline j}_{\overline i}=a(\underline{\underline j},\underline s)\underline{\underline j}_{\overline i}$;
\item let $\overline i\in T(r)$, then $(\underline{\underline j}+\underline{\underline j}(k,i_{k,q}))_{\overline i}=\underline{\underline j}_{\overline i}+\eta_{i_{k,q}}^{i^k}\prod_{k'\neq k}j_{k',{i^{k'}}}$  ($j_{k',0}:=1$) and 
$$(\underline{\underline j}+\underline{\underline j}(k+1,i_{k,q}))_{\overline i}=\underline{\underline j}_{\overline i}+\eta_{i_{k,q}}^{i^{k+1}}\prod_{k'\neq k+1}j_{k',i^{k'}};$$
\item $\theta^{\underline{\underline j}+\underline{\underline j}(k,i_{k,q})}=\theta^{\underline{\underline j}+\underline{\underline j}(k+1,i_{k,q})}=\theta^{\underline{\underline j}}\theta_{i_{k,q}}$.
\end{itemize}
Using the facts above, $M_k$ can be expressed as the sum of $A_k$,
$$
\begin{matrix}
\sum_{q=1}^{s_k}\sum_{\overline i \in T(r),i^k=i_{k,q}}(-1)^{c(i_{k,q},\underline i)}(\sum_{k'\geq k+1}j_{k'}+\sum_{k'\geq k}s_{k'})s_k^{-1}\\
\cdot a(\underline{\underline j},\underline s)(\prod_{k'\neq k}j_{k',i^{k'}})\theta^{-1}_{\overline i}\theta^{\underline{\underline j}}\theta_{i_{k,q}}e_{\mathrm{Del}(i_{k,q},\underline i),\overline i}
\end{matrix}
$$
and
$$
\begin{matrix}
\sum_{q=1}^{s_k}\sum_{\overline i\in T(r),i^{k+1}=i_{k,q}}(-1)^{c(i_{k,q},\underline i)}\mathrm{max}\{1,\sum_{k'\geq k+1}j_{k'}+\sum_{k'\geq k+1}s_{k'}\}s_k^{-1}\\
\cdot a(\underline{\underline j},\underline s)(\prod_{k'\neq k+1}j_{k',i^{k'}})\theta^{-1}_{\overline i}\theta^{\underline{\underline j}}\theta_{i_{k,q}}e_{\mathrm{Del}(i_{k,q},\underline i),\overline i}.
\end{matrix}
$$
Replacing the sequences $\overline i\in T(r),i^k=i_{k,q}$ and $\overline i\in T(r),i^{k+1}=i_{k,q}$ by the subsequences obtained by respectively deleting $i^k$ and $i^{k+1}$, then we see that $M_k=A_k+B_k^-+B_k^+$.

The case of $0<k<r$ and $s_k=0$. By the proof of Lemma \ref{adjoint}, we have
$$
M_k=\sum_{\overline i\in T(r-1)}(-1)^{s+1+k}a(\cup_k\underline{\underline j},\mathrm{Del}_k(\underline s))(\cup_k\underline{\underline j})_{\overline i}\theta^{-1}_{\overline i}\theta^{\underline{\underline j}}e_{\underline i,\overline i}.
$$
Note that
$$
a(\cup_k\underline{\underline j},\mathrm{Del}_k(\underline s))=(\sum_{k'\geq k+1}j_{k'}+\sum_{k'\geq k}s_{k'})a(\underline{\underline j},\underline s)=(\sum_{k'\geq k+1}j_{k'}+\sum_{k'\geq k+1}s_{k'})a(\underline{\underline j},\underline s)$$
and $
(\cup_k\underline{\underline j})_{\overline i}=\mathrm{Del}_k(\underline{\underline j})_{\overline i}+\mathrm{Del}_{k+1}(\underline{\underline j})_{\overline i}$. Clearly, $M_k=B_k^-+B_k^+$. On the other hand, by definition $A_k=0$, from which we have $M_k=A_k+B_k^-+B_k^+$.

(iii) The case of $k=0$ can be checked similarly as above. When $k=r$,  there are three situations should be treated separately: (1) $s_r>0$, (2) $j_r>0,s_r=0$  and (3) $j_r=s_r=0$. We omit the details. This completes the proof.
\end{proof}

{\itshape The proof of Theorem \ref{solvability}.} We check the coincidence of \eqref{nabla LHS} and \eqref{Theta RHS} when $r>0,s\geq0$. The remain cases can be checked similarly. Given any non-zero $h^{[\underline j]}\zeta_{\underline s,\underline i}\in T(r,s+1)$. Note that 
\begin{eqnarray}\label{expression A}
\sum_{k=1}^rA_k=\sum_{\overline i\in T(r)} a(\underline{\underline j},\underline s)\underline{\underline j}_{\overline i}\theta^{-1}_{\overline i}\theta^{\underline{\underline j}}\sum_{i\in\cup\underline i}(-1)^{c(i,\underline i)}\theta_ie_{\mathrm{Del}(i,\underline i),\overline i}.
\end{eqnarray}
 On the other hand, we have
\begin{eqnarray}\label{expression B}
\begin{array}{c}
\sum_{k=0}^r(B_k^{-}+B_k^{+})=\sum_{k=0}^{r-1}(B_k^{+}+B_{k+1}^{-})\\
=\sum^{r-1}_{k=0}\sum_{\overline i\in T(r-1)}(-1)^{s+1+k}a(\underline{\underline j},\underline s)j_{k+1}\mathrm{Del}_{k+1}(\underline{\underline j})_{\overline i}\theta^{-1}_{\overline i}\theta^{\underline{\underline j}}e_{\underline i,\overline i}\\
=\sum_{\overline i\in T(r)}a(\underline{\underline j},\underline s)\underline{\underline j}_{\overline i}\theta^{-1}_{\overline i}\theta^{\underline{\underline j}}\sum_{k=0}^{r-1}(-1)^{s+1+k}\theta_{i^{k+1}}e_{\underline i,\mathrm{Del}_{k+1}(\overline i)}.
\end{array}
\end{eqnarray}
We point out that though the definition of $B_r^-$ is somehow complicated, the equality
$$
B^+_{r-1}+B^-_r=\sum_{\overline i\in T(r-1)}(-1)^{s+r}a(\underline{\underline j},\underline s)j_r\mathrm{Del}_r(\underline{\underline j})_{\overline i}\theta^{-1}_{\overline i}\theta^{\underline{\underline j}}e_{\underline i,\overline i}
$$
always holds true.
Combing \eqref{expression A}, \eqref{expression B}, $
\eqref{nabla LHS}=\sum_{k=0}^rM_k$
and the definition of $\Theta$, the coincidence of \eqref{nabla LHS} and  \eqref{Theta RHS} follows. This completes the proof of Theorem \ref{solvability}.
\subsection{$\varphi_\infty$ is a geometric solution}

 Return to the setting of \S2. To introduce the evaluation morphism in Definition \ref{evaluation}, we need a technique called the scalarization of nilpotent Higgs sheaves. Let $(E,\theta)$ be a nilpotent Higgs sheaf on $X_{\log}'/k$ of level $\leq \ell$. Define an $\mathcal O_{X'}$-module
$$R(E,\theta):=E\oplus\mathcal O_{X'}\oplus\rho_\theta(S^+ T_{X_{\log}'/k}),$$
where $S^+ T_{X_{\log}'/k}=\bigoplus_{i>0}S^iT_{X_{\log}'/k}$ and $\rho_\theta:S^\bullet T_{X_{\log}'/k}\to\mathcal End_{\mathcal O_{X'}}(E)$ is induced by $\theta$. It is easy to see that the canonical $\mathcal O_{X'}$-algebra structure on $E\oplus S^\bullet T_{X_{\log}'/k}$ induces an $\mathcal O_{X'}$-algebraic structure on $\mathrm{Sc}(E,\theta)$. Note that 
$$\theta\in\rho_\theta(S^+ T_{X_{\log}'/k})\otimes\Omega^1_{X_{\log}'/k}\subset\mathcal End_{\mathcal O_{X'}}(E)\otimes\Omega^1_{X_{\log}'/k},$$
then by construction we have
$$\theta\in R(E,\theta)\otimes\Omega^1_{X_{\log}'/k}.$$
Consequently, there is a Higgs module structure on $R(E,\theta)$:
$$R(E,\theta)\to R(E,\theta)\otimes\Omega^1_{X_{\log}'/k},~r\mapsto r\theta,~r\in R(E,\theta).$$
By abuse of notation, we denote this Higgs field by $\theta$. 
\begin{definition}\label{scalarization}
We call $\mathrm{Sc}(E,\theta):=(R(E,\theta),\theta)$ the scalarization of $(E,\theta)$.
\end{definition}
 
For simplicity, we abbreviate $R(E,\theta)$ as $R$.  Let $\mathrm{Frame}$ be the sheaf of frames of $\Omega^1_{X'_{\log}/k}$. Over any open subset $U$ of $X$ and any $r\geq0$, we give the following data:
 \begin{itemize}
 \item any frame $(\omega'_1,\cdots,\omega'_n)\in\mathrm{Frame}$;
 \item any $r+1$ liftings of $F$ over $U$, say $\tilde F_0,\cdots,\tilde F_r$;
 \item any section of $\tau_{<p-\ell}\Omega^\bullet(E,\theta)$ over $U'$ which can be written as
 $$
 e+\sum_{0<q<p-\ell}\frac{1}{q!}\sum_{1\leq i_1,\cdots,i_q\leq n}e_{i_1,\cdots,i_q}\otimes\omega_{i_1}'\wedge\cdots\wedge\omega_{i_q}'\in\Omega^\bullet(E,\theta)_{U'},~,e,e_{i_1,\cdots,i_q}\in E.
 $$
 \end{itemize}
 Using these data above and the tensor product ring structure on $R\otimes\Omega^\bullet_{X_{\log}/k}$,  one can construct a ring homomorphism
 $$
 B^{\mathbb Z_{(p)}}_{\mathrm{HdR},f}\to\Gamma(U',R\otimes F_*\Omega^\bullet_{X_{\log}/k})$$
 as follows: 
 \begin{itemize}
 \item $\theta_i\mapsto\vartheta_i\otimes 1\in\Gamma(U',\rho_\theta(S^+T_{X_{\log}/k})\otimes F_*\mathcal O_X)$ for $i\leq n$ and $\theta_i\mapsto0$ for $i>n$, where $\theta=\sum_{i=1}^n\vartheta_i\otimes\omega_i'$;
 \item $e_\emptyset\mapsto e\otimes1\in\Gamma(U',E\otimes F_*\mathcal O_X)$, $e_I\mapsto e_{i_1,\cdots,i_q}\otimes1\in\Gamma(U',E\otimes F_*\mathcal O_X)$ for $I=\{i_1,\cdots,i_q\}$ with $i_1<\cdots<i_q\leq n$ and $e_I\mapsto 0$ otherwise;
 \item $h_{k,l}\mapsto 1\otimes h_{\tilde F_k\tilde F_{k-1}}(\omega_l)\in\Gamma(U',\mathcal O_{X'}\otimes F_*\mathcal O_X)$ for $k\leq r$ and $l\leq n$ and $0$ otherwise;
 \item $\zeta_{k,l}\mapsto 1\otimes\zeta_{\tilde F_k}(\omega_l)\in\Gamma(U',\mathcal O_{X'}\otimes F_*\Omega^1_{X_{\log}/k})$ for $k\leq r$ and $l\leq n$ and $0$ otherwise.
 \end{itemize}
 We regard $B^{\mathbb Z_{(p)}}_{\mathrm{HdR},f}$ as a constant sheaf on $X'$. Clearly,  the construction above gives rise to a morphism of sheaves on $X'$
  $$\mathrm{Ev}:\mathrm{Frame}\times\bigsqcup_{r\geq0}\mathcal L^{r+1}\times\tau_{<p-l}\Omega^\bullet(E,\theta)\to\mathcal Hom(B^{\mathbb Z_{(p)}}_{\mathrm{HdR},f},R\otimes F_*\Omega^\bullet_{X_{\log}/k}).$$
 Set $(H,\nabla):=C^{-1}_{\mathcal X/\mathcal S}(E,\theta)$. Using the natural projection $R\otimes F_*\Omega^\bullet_{X_{\log}/k}\to E\otimes  F_*\Omega^\bullet_{X_{\log}/k}$ and the identifications
$$
E_{U'}\otimes F_*\Omega^\bullet_{U_{\log}/k}=\Omega^\bullet(H_{\tilde F_0},\nabla_{\tilde F_0})\cong F_*\Omega^\bullet(H,\nabla)_U,$$ one obtains the desired 
 \begin{definition}\label{evaluation}
The discussion above gives rise to an evaluation morphism
 $$
\mathrm{ev}:\mathrm{Frame}\times\bigsqcup_{r\geq0}\mathcal L^{r+1}\times\tau_{<p-\ell}\Omega^\bullet(E,\theta)\to\mathcal Hom(B^{\mathbb Z_{(p)}}_{\mathrm{HdR},f},F_*\Omega^\bullet(H,\nabla)).
 $$
 It can be reformulated as a morphism of sheaves of sets on $X'$
 $$
 \mathrm{Frame}\times\bigsqcup_{r\geq0}\mathcal L^{r+1}\times B^{\mathbb Z_{(p)}}_{\mathrm{HdR},f}\to \mathcal Hom_{\mathcal O_{X'}}(\tau_{<p-\ell}\Omega^\bullet(E,\theta),F_*\Omega^\bullet(H,\nabla))
$$
which is again denoted by $\mathrm{ev}$.
\end{definition}
\begin{lemma}
 The evaluation morphism above induces a morphism
$$
\mathrm{ev}(-,-,\varphi_p(r,s)):\mathrm{Frame}\times\mathcal L^{r+1}\to\mathcal Hom_{\mathcal O_{X'}}(\tau_{<p-\ell}\Omega^{r+s}(E,\theta),\tau_{p-\ell}F_*\Omega^s(H,\nabla))
$$
which is independent of the first factor $\mathrm{Frame}$. Consequently, we obtain a morphism
$$
\mathrm{ev}(-,\varphi_p(r,s)):\mathcal L^{r+1}\to\mathcal Hom_{\mathcal O_{X'}}(\tau_{<p-\ell}\Omega^{r+s}(E,\theta),\tau_{p-\ell}F_*\Omega^s(H,\nabla)).
$$
 \end{lemma}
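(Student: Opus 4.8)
\emph{Strategy.} The assertion is local on $X'$, and the only frame-dependence in the construction preceding Definition~\ref{evaluation} sits in the ring homomorphism $B^{\mathbb Z_{(p)}}_{\mathrm{HdR},f}\to\Gamma(U',R\otimes F_*\Omega^\bullet_{X_{\log}/k})$. So I would proceed in three steps: (i) describe how that homomorphism changes when the frame changes; (ii) repackage the answer as an invariance of the single element $\varphi_\infty(r,s)$ of Definition~\ref{varphi construction} under a $\Gl_n$-action; (iii) prove that invariance.

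\emph{Step (i).} Over an affine $U'$ trivialising $\Omega^1_{X'_{\log}/k}$, write $\omega_l''=\sum_j A_{lj}\omega_j'$ with $A=(A_{lj})\in\Gl_n(\mathcal O_{X'}(U'))$. The Deligne--Illusie data are $\mathcal O_{X'}$-linear maps $\zeta_{\tilde F_k}\colon\Omega^1_{X'_{\log}/k}\to F_*\Omega^1_{X_{\log}/k}$ and $h_{\tilde F_k\tilde F_{k-1}}\colon\Omega^1_{X'_{\log}/k}\to F_*\mathcal O_X$ (with $dh_{\tilde F_k\tilde F_{k-1}}=\zeta_{\tilde F_k}-\zeta_{\tilde F_{k-1}}$), so the evaluated generators $\zeta_{k,l},h_{k,l}$ transform covariantly, $\zeta_{k,l}\mapsto\sum_jA_{lj}\zeta_{k,j}$, $h_{k,l}\mapsto\sum_jA_{lj}h_{k,j}$; the Frobenius twist is harmless here, being already built into the $\mathcal O_{X'}$-module structure of $F_*(-)$. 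Since $\theta$ and the chosen section $x$ of $\tau_{<p-\ell}\Omega^\bullet(E,\theta)$ do not depend on the frame, the evaluated $\theta_l$ and $e_I$ transform contragradiently, by $(A^{-1})^{\mathrm T}$ and by $\bigwedge^{|I|}(A^{-1})^{\mathrm T}$ respectively. All four substitution rules have coefficients in $\mathcal O_{X'}$ and preserve the total power in the $h$'s and $\zeta$'s.

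\emph{Step (ii).} After extension of scalars to $\mathcal O_{X'}$, the rules of Step~(i) define a continuous automorphism $\rho_A$ of $B_1$ --- which is where $\varphi_\infty(r,s)$ actually lives, since no genuine negative power of $\theta$ occurs in it. One checks that $\rho_A$ respects the skew-commutativity of the $\zeta$'s, the square-zero relations among the $e$'s, and the $\mathfrak b_m$-filtration, that $A\mapsto\rho_A$ is a homomorphism, and that it passes to the quotient relevant to $\varphi_p(r,s)$. By Step~(i) the evaluation homomorphism for $(\omega_l'')$ is the one for $(\omega_l')$ post-composed with $\rho_A$. Moreover the evaluation of $\varphi_\infty(r,s)$ already lands in $E\otimes F_*\Omega^s_{X_{\log}/k}$ (a monomial in the $\theta$'s acts as an endomorphism of $E$, then hits $e_{\underline i,\overline i}\in E$), so the projection $R\to E$ and the frame-independent identification $E\otimes F_*\Omega^\bullet_{X_{\log}/k}\cong F_*\Omega^\bullet(H,\nabla)$ are immaterial. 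Hence the Lemma is reduced to: $\varphi_\infty(r,s)$, and a fortiori $\varphi_p(r,s)$, is $\rho_A$-fixed for every $A\in\Gl_n$ over every $\mathbb Q$-algebra.

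\emph{Step (iii), and the main point.} As $\Gl_n$ is generated, as an algebraic group over $\mathbb Q$, by its diagonal torus and the root subgroups $c\mapsto I+cE_{ij}$ ($i\neq j$), it suffices to treat diagonal matrices and $A=I+cE_{ij}$; the latter is equivalent to $\partial_{ij}\varphi_\infty(r,s)=0$, where $\partial_{ij}$ is the derivation of $B_1$ with $\partial_{ij}\zeta_{k,i}=\zeta_{k,j}$, $\partial_{ij}h_{k,i}=h_{k,j}$, $\partial_{ij}\theta_j=-\theta_i$, $\partial_{ij}e_{i_1,\dots,i_q}=-\sum_{m:\,i_m=j}e_{i_1,\dots,i_{m-1},i,i_{m+1},\dots,i_q}$ and zero on the remaining generators. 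The diagonal case is pure bookkeeping: in each monomial $\underline{\underline j}_{\overline i}\,\theta^{-1}_{\overline i}\theta^{\underline{\underline j}}e_{\underline i,\overline i}h^{[\underline{\underline j}]}\zeta_{\underline s,\underline i}$ of $\varphi_\infty(r,s)$ the $r$ inverse factors of $\theta^{-1}_{\overline i}$ are matched index-for-index with the $r$ ``$\overline i$-indices'' of $e_{\underline i,\overline i}$, $\theta^{\underline{\underline j}}$ is matched with $h^{[\underline{\underline j}]}$, and $\zeta_{\underline s,\underline i}$ with the remaining $s$ indices of $e_{\underline i,\overline i}$, so the net weight in every frame index vanishes and each monomial is already invariant. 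The case $A=I+cE_{ij}$, $i\neq j$, is the real work: I would substitute Definition~\ref{varphi construction} into $\partial_{ij}$ and match terms, the content being a combinatorial identity relating the coefficients $a(\underline{\underline j},\underline s)$ and the factorials hidden in $h^{[\underline{\underline j}]}$ to the summation over $\overline i\in T(r)$ --- the same brand of elementary but notation-heavy computation as in the proof of Theorem~\ref{solvability}. (Equivalently, and perhaps more transparently, one can exhibit the expansion of $\varphi_\infty(r,s)$ as built entirely out of $\Gl_n$-invariant tensor contractions of $\theta$, the monomials $\theta^{-1}_{\overline i}$, the section $x$, and the maps $\zeta_{\tilde F_k}$, $h_{\tilde F_k\tilde F_{k-1}}$, from which the invariance is automatic.) Granting the invariance, the two frame-evaluations of $\varphi_p(r,s)$ coincide, so $\mathrm{ev}(-,-,\varphi_p(r,s))$ is constant along $\mathrm{Frame}$; since $\mathrm{Frame}$ is Zariski-locally non-empty it then factors through $\mathcal L^{r+1}$, yielding the asserted $\mathrm{ev}(-,\varphi_p(r,s))$. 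The principal obstacle is exactly this elementary-matrix identity: each ingredient is trivial, but controlling the interplay between the factorial denominators and the $\overline i$-summation needs care.
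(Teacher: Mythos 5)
Your reduction framework is reasonable, but as written the proposal has a genuine gap: the entire content of the lemma is the frame-independence of the evaluated $\varphi_p(r,s)$, and your main route defers exactly that content. You correctly reduce to invariance of $\varphi_\infty(r,s)$ under change of frame, handle the diagonal (weight-counting) case, and then reduce the remaining case to the identity $\partial_{ij}\varphi_\infty(r,s)=0$ for the root-group derivations --- but you explicitly leave that identity unverified, calling it ``the real work.'' Without it, nothing has been proved. The paper's proof does not take this infinitesimal route at all: it carries out an explicit chain of rewritings of Definition \ref{varphi construction}, absorbing the factors $\underline{\underline j}_{\overline i}\theta^{-1}_{\overline i}$ into the divided powers so that
$$
\varphi_\infty(r,s)=\sum_{\underline s}\Bigl(\sum_{\underline j}a'(\underline j,\underline s)\prod_k\bigl(\sum_l\theta_lh_{k,l}\bigr)^{[j_k]}\Bigr)\Bigl(\sum_{\underline i,\overline i}e_{\underline i,\overline i}\zeta_{\underline s,\underline i}h_{\overline i}\Bigr),
$$
and then identifies the evaluation of each such term with the composite $\rho_3\circ\rho_2\circ\rho_1$ of intrinsically defined morphisms (skew-symmetrization, application of $\zeta_{\tilde F_k}$ and $h_{\tilde F_{k-1}\tilde F_k}$ to tensor factors, and divided powers of $(\mathrm{id}\otimes h_{\tilde F_{k-1}\tilde F_k})F^*\theta$), from which frame-independence is manifest. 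This is precisely the alternative you mention only in a parenthesis, and it is where the actual computation lives; your proposal supplies neither that rewriting nor the $\partial_{ij}$-identity, so the decisive step is missing in both branches.

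Two secondary points. First, a change of frame is given by a matrix $A$ with entries in $\mathcal O_{X'}(U')$, not in a field, so the appeal to ``$\Gl_n$ is generated by the torus and root subgroups'' cannot be applied directly to $\Gl_n(\mathcal O_{X'}(U'))$ (elementary plus diagonal matrices do not generate the general linear group over an arbitrary commutative ring); you would need to recast the statement scheme-theoretically, e.g.\ by showing the stabilizer of $\varphi_\infty(r,s)$ is a closed subgroup functor containing the torus and the root groups, which adds work your sketch does not address. Second, your observations that no genuine negative powers of $\theta$ occur (because $\underline{\underline j}_{\overline i}\neq0$ forces $\theta^{\underline{\underline j}}$ to contain each $\theta_{i^k}$), that the substitution preserves total $h,\zeta$-degree so the truncation to $\varphi_p(r,s)$ is compatible, and that the identification $E\otimes F_*\Omega^\bullet_{X_{\log}/k}\cong F_*\Omega^\bullet(H,\nabla)$ depends only on $\tilde F_0$ and not on the frame, are all correct and consistent with the paper; they just do not substitute for the core combinatorial verification that the paper performs.
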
 \begin{proof}
 Given any $r,s\geq 0$ and any data below Definition \ref{scalarization}. By direct computation, we have
 $$
 \begin{array}{rcl}
 \varphi_\infty(r,s)&=&\sum_{(\underline{\underline j},\underline s,\underline i)\in T(r,s)}[\sum_{\overline i\in T(r)}a(\underline{\underline j},\underline s)\underline{\underline j}_{\overline i}\theta^{-1}_{\overline i}\theta^{\underline{\underline j}}e_{\underline i,\overline i}]h^{[\underline{\underline j}]}\zeta_{\underline s,\underline i}\\
 &=&\sum_{(\underline{\underline j},\underline s,\underline i)\in T(r,s)}\sum_{\overline i\in T(r)}a(\underline{\underline j},\underline s)\theta^{\underline{\underline j}-\sum_k\underline{\underline j}(k,i^k)}h^{[\underline{\underline j}-\sum_k\underline{\underline j}(k,i^k)]}e_{\underline i,\overline i}\zeta_{\underline s,\underline i}h_{\overline i}\\
&=&\sum_{\underline{\underline j}\in\underline T(r)}\sum_{\underline s\in P(r,s)\underline i\in T(s),\overline i\in T(r)}(\prod_ls_l!)^{-1}a(\underline{\underline j},\underline s)\theta^{\underline{\underline j}-\sum_k\underline{\underline j}(k,i^k)}h^{[\underline{\underline j}-\sum_k\underline{\underline j}(k,i^k)]}e_{\underline i,\overline i}\zeta_{\underline s,\underline i}h_{\overline i}\\
&=&\sum_{\underline{\underline j}\in\underline N(r)}\sum_{\underline s\in P(r,s), \underline i\in T(s),\overline i\in T(r)}(\prod_ls_l!)^{-1}a(\underline{\underline j}+\sum_k\underline{\underline j}(k,i^k),\underline s)\theta^{\underline{\underline j}}h^{[\underline{\underline j}]}e_{\underline i,\overline i}\zeta_{\underline s,\underline i}h_{\overline i}\\
&=&\sum_{\underline s\in P(r,s)}(\sum_{\underline j\in N(r)}a'(\underline j,\underline s)\prod_k(\sum_l\theta_lh_{k,l})^{[j_k]})(\sum_{ \underline i\in T(s),\overline i\in T(r)}e_{\underline i,\overline i}\zeta_{\underline s,\underline i}h_{\overline i}),
\end{array}
$$
where
\begin{itemize}
\item $h_{\overline{i}}:=\prod_kh_{k,i^k}$, where $h_{k,0}:=1$;
\item $\underline T(r)$ consisting of $\{j_{k,l}\}$ such that $j_k(=\sum_lj_{k,l})>0$ for $k\leq r$ and $=0$ for $k>r$;
\item $\underline N(r)$ consisting of $\{j_{k,l}\}$ such that $j_{k,l}=0$ for $k>r$;
\item $N(r)$ consisting of $\{j_k\}_{k\in\mathbb Z_{>0}}$  such that $j_k=0$ for $k>r$;
\item $P(r,s)$ consisting of $\{s_l\}_{l\in\mathbb Z_{>0}}$ such that $s_l=0$ for $l>r$ and $\sum_ls_l=s$;
\item $a'(\underline j,\underline s)\in\mathbb Z_{(p)}$ which depends on $\underline j\in N(r),\underline s\in P(r,s)$.
\end{itemize}

Clearly, we have
$$
\varphi_p(r,s)=\sum_{\underline s\in P(r,s),\underline j\in N(r),s+j<p}a'(\underline j,\underline s)\prod_k(\sum_l\theta_lh_{k,l})^{[j_k]}(\sum_{ \underline i\in T(s),\overline i\in T(r)}e_{\underline i,\overline i}\zeta_{\underline s,\underline i}h_{\overline i}),
$$
where $s:=\sum_ks_k,j:=\sum_kj_k$. One can check that for any $\underline s\in P(r,s),\underline j\in N(r),s+j<p$, 
$$\mathrm{ev}((\tilde F_0,\cdots,\tilde F_r),\prod_k(\sum_l\theta_lh_{k,l})^{[j_k]}(\sum_{ \underline i\in T(s),\overline i\in T(r)}e_{\underline i,\overline i}\zeta_{\underline s,\underline i}h_{\overline i}))$$ is the restriction of the composite of the following morphisms
$$
\begin{array}{c}
\Omega^{r+s}(E,\theta)_{U'}\stackrel{\rho_1}{\rightarrow}E_{U'}\otimes(\Omega^1_{U'_{\log}/k})^{\otimes(r+s)}\stackrel{\rho_2}{\rightarrow}E_{U'}\otimes F_*\Omega^s_{U_{\log}/k}\stackrel{\rho_3}{\rightarrow}E_{U'}\otimes F_*\Omega^s_{U_{\log}/k}\\
=F_*\Omega^s(H_{\tilde F_0},\nabla_{\tilde F_0})\cong F_*\Omega^s(H,\nabla)_U.
\end{array}
$$
Here
\begin{itemize}
\item $\rho_1$ sends $e\otimes\beta_1\wedge\cdots\wedge\beta_{r+s}\in\Omega^{r+s}(E,\theta)_{U'}$ to $\sum_{\sigma\in\mathcal S_{r+s}} \mathrm{sgn}(\sigma)e\otimes\beta_{\sigma(1)}\otimes\cdots\beta_{\sigma(r+s)}$, where $\mathcal S_{r+s}$ is the group of permutations of $\{1,\cdots,r+s\}$;
\item assume $r,s>0$ and write $\{l|s_l>0\}=\{l_1,\cdots,l_q\}$ with $l_1<\cdots<l_q$. In this case, $\rho_2$ sends $e\otimes\beta_1\otimes\cdots\otimes\beta_{r+s}$ to
$$
e\otimes\zeta_{\tilde F_{k_1}}(\beta_1)\wedge\cdots\wedge\zeta_{\tilde F_{k_s}}(\beta_s)h_{\tilde F_0\tilde F_1}(\beta_{s+1})\cdots h_{\tilde F_{r-1}\tilde F_r}(\beta_{s+r}),
$$
where the sequence $k_1,\cdots,k_s$ is $s_{l_1}$-times $l_1$,$\cdots$, $s_{l_q}$-times $l_q$. The other cases of $r,s$ can be discussed similarly.
\item using the isomorphism $E_{U'}\otimes F_*\Omega^s_{U_{\log}/k}\cong(E_{U'}\otimes F_*\mathcal O_U)\otimes_{F_*\mathcal O_U} F_*\Omega^s_{U_{\log}/k}$, $\rho_3$ is the automorphism given by
$$
\prod_{k=1}^r((\mathrm{id}_{E_{U'}}\otimes h_{\tilde F_{k-1}\tilde F_k})F^*\theta)^{[j_k]}\otimes\mathrm{id}_{F_*\Omega^s_{U_{\log}/k}}.
$$
Here $(\mathrm{id}_{E_{U'}}\otimes h_{\tilde F_{k-1}\tilde F_k})F^*\theta$ is an automorphism of $F^*E_{U'}$ and hence can be regarded as an automorphism of $E_{U'}\otimes F_*\mathcal O_U$.
\end{itemize}
This completes the proof of this lemma.
 \end{proof}

\end{document}